    \theoremstyle{plain}
    \newtheorem{theo}{Theorem}[section]
    \newtheorem{prop}{Proposition}[section]
    \newtheorem{coro}[theo]{Corollary}
    \newtheorem{lemma}{Lemma}[section]
    \theoremstyle{definition}
    \newtheorem{defin}{Definition}[section]
    \newtheorem{eje}{Example}[section]
    \newtheorem{nota}{Remark}[section]
\numberwithin{equation}{section}
\newcommand{\arxiv}[1]{\href{http://arxiv.org/abs/#1}{arXiv:#1}}
\def\xx{{ x}}
\def\yy{{ y}}
\def\zz{{ z}}
\def\ww{{ w}}
\def\ee{{ e}}
\def\00{{ 0}}
\def\rr{{ r}}
\def\a{ \alpha}
\def\aa{ \alpha}
\def\bb{ \beta}
\def\AA{ A}
\def\N{\mathbb N}
\def\R{\mathbb R}
\def\C{\mathbb C}
\def\d{\partial}
\begin{document}

\title[Formal $P$-Gevrey solutions for higher order PDEs]{Formal Gevrey solutions --in analytic germs-- for higher order holomorphic PDEs}

\author{Sergio A. Carrillo}

\address{Escuela de Ciencias Exactas e Ingenier\'{i}a, Universidad Sergio Arboleda, Calle 74, $\#$ 14-14, Bogot\'{a}, Colombia.}
\email{sergio.carrillo@usa.edu.co}

\author{Alberto Lastra}

\address{Universidad de Alcal\'{a}, Departamento de F\'{i}sica y Matem\'{a}ticas, Ap. de Correos 20, E-28871 Alcal\'{a} de Henares (Madrid), Spain.}
\email{alberto.lastra@uah.es}

	

\thanks{ Both authors are supported by the project PID2019-105621GB-I00 of Ministerio de Ciencia e Innovaci\'on, Spain. The first author is supported by the project ``An\'{a}lisis complejo, ecuaciones diferenciales y sumabilidad" (IN.BG.086.20.002 Univ. Sergio Arboleda). The second author is supported by Direcci\'on General de Investigaci\'on e Innovaci\'on, Consejer\'ia de Educaci\'on e Investigaci\'on of Comunidad de Madrid (Spain), and Universidad de Alcal\'a under grant CM/JIN/2019-010, Proyectos de I+D para J\'ovenes Investigadores, Univ. de Alcal\'a 2019.
}

\subjclass[2020]{Primary 35G50, Secondary 35C10, 34M25}

\keywords{Holomorphic PDEs, power series in analytic germs, Maillet-type theorems}

\maketitle

\begin{abstract}
We consider a family of holomorphic PDEs whose singular locus is given by the zero set of an analytic map $P$ with $P(0)=0$. Our goal is to establish conditions for the existence and uniqueness of formal power series solutions and to determine their divergence rate. In fact, we prove that the solution is Gevrey in $P$, giving new information on divergency while compared to the classical Gevrey classes. If $P$ is not singular at $0$, we also provide Poincar\'e conditions to recover convergent solutions. Our strategy is to extend the dimension and lift the given PDE to a problem where results of singular PDEs can be applied. Finally, examples where the Gevrey class in $P$ is optimal are included. 
\end{abstract}

\section{Introduction}\label{Sec. Introduction}

The growth associated to the coefficients of formal solutions to functional equations has been widely studied in the literature. Results on this direction are known as \textit{Maillet type theorems}. They coined their name in honor to the  pioneering work of E. Maillet~\cite{maillet} (1903) where it was shown that any formal power series solution of a nonlinear algebraic ordinary differential equation is $s$-Gevrey, for some $s\geq 0$, see Section \ref{Sec. Gevrey series} for definitions. Further initial results in this context can be found in~\cite{malgrange,ramis,sibuya} where optimal bounds are interpreted as slopes of adequate Newton polygons associated to the given analytic equation. Recognizing optimal values for the Gevrey class of formal solutions is of utmost importance in the study of (Borel-, multi-)summability phenomena, a great tool to construct analytic solutions of the given problem which are asymptotic to the formal ones. 

The increasing interest on these results has provided  advances in other frameworks. For instance, on generalized power series solutions of ordinary differential equations~\cite{gontsovgoryuchkina}, in singularly perturbed problems~\cite{CDRSS}, integro-differential equations~\cite{remy}, moment PDEs~\cite{balseryoshino,lamisu,su}, difference and $q$-difference equations~\cite{immink,vizio,zhang}, among others. We can also mention  results in dynamical systems, such as the Gevrey character of invariant formal curves to analytic local diffeomorphisms~\cite{baldoma,lopez}.

Convergence and divergence (Maillet type) theorems have also been developed for singular holomorphic partial differential equations (of non-Kowalevski type, Fuchsian, of totally non-characteristic type, among others). A good account on these results can be found at Gerard and Tahara's book ~\cite{geta} and the references therein. Moreover, optimal Gevrey bounds have been found for many families of PDEs in terms of slopes of adequate Newton polygons, see, e.g., \cite{Hibino04,Hibino99,shirai,shirai2,yamazawa} and the recent work \cite{lastratahara}. The topic is an active subject of research where many problems on summability of solutions remain open.  

On the other hand, results on singular PDES are not directly applicable to other type of equations, for instance, mixing irregular singularities and singular perturbations. An interesting example is the family of doubly singular equations \begin{equation}\label{Eq. Intro Doubly}
\epsilon^\sigma z^{r+1}\frac{\d y}{\d z}=f(z,\epsilon,y),
\end{equation} where $\sigma$ and $r$ are positive integers and $f$ is analytic at the origin. The equation exhibits an irregular singularity at $z=0$ and a singular behavior as $\epsilon\to 0$. In this case, the optimal Gevrey type is only revealed when the equation is considered in the variable $t=z^r\epsilon^\sigma$. In fact, the relation between true solutions asymptotic to formal ones was answered in \cite{CDMS07} with the development of \textit{monomial summablity}. Later on, the extension of this notion to more variables led naturally to the study of equations of type \begin{equation}\label{Eq. Intro X}
\epsilon^\sigma x_1^{\a_1}\cdots x_n^{\a_n}\left( \lambda_1 x_1\frac{\d y}{\d x_1}+\cdots+\lambda_n x_n\frac{\d y}{\d x_n}\right)=f(x,\epsilon,y),
\end{equation} where $\lambda_1,\dots,\lambda_n>0$. This system is the higher dimension analogue to equation (\ref{Eq. Intro Doubly}). In this case, the optimal Gevrey type is obtained working with the variable $t=\epsilon^\sigma x_1^{\a_1}\cdots x_n^{\a_n}$. Moreover, novel results on the monomial summability of formal solutions are available in this framework \cite{Carr1}, see also \cite{YamazawaYoshino15} for the case $\a_j=0$.

Recently, the foundations of asymptotic expansions and summability with respect to an arbitrary analytic germ $P:(\C^d,0)\to 0$ such that $P(0)=0$ were established in \cite{mosc}. In particular, \textit{$P$-$k$-Gevrey series} were defined and systematized. Roughly speaking, a formal power series $\widehat{y}\in \C[[x]]$, $x=(x_1,\dots,x_d)$ is $P$-$k$-Gevrey if it can be written as \begin{equation}\label{Eq. y Intro}
	\widehat{y}=\sum_{n=0}^\infty y_n P^n,\quad \text{ where } \sup_{\xx\in D}|y_n(\xx)|\leq CA^n n!^{k},
\end{equation} for some constants $C,A>0$, and where the coefficients $y_n$ are holomorphic in a common polydisc $D\subseteq \C^d$ centered at the origin. This concept captures the idea of measuring the divergence of a series using the leading variable $t=P(x)$. Moreover, it gives more precise information on the divergence rate of $\widehat{y}$, inaccessible when only working with $x_1,\dots,x_d$ separately. 

In this setting, we can pose in greater generality the family of problems \begin{equation}\label{Eq. Intrp k=1}
P(x)L_1(y)=F(x,y),\qquad L_1:=a_1(x)\d_{x_1}+\cdots a_d(x)\d_{x_d},
\end{equation} with analytic coefficients, which include equations (\ref{Eq. Intro Doubly}) and (\ref{Eq. Intro X}) as particular cases. The key point to obtain existence and uniqueness of formal solutions of (\ref{Eq. Intrp k=1}) is that $$P \text{ divides } L_1(P).$$ Geometrically, this condition means that the local hypersurface $Z_P:=\{x\in(\C^d,0) : P(x)=0\}$ is invariant under the vector field $L_1$. In this case, the solution turns out to be $P$-$1$-Gevrey, as it was proved in \cite[Theorem 1]{cahu}.  Surprisingly, this recovered many cases on the Gevrey class of formal power series solutions of ODEs and PDEs that have been treated in the literature. Finally, results of this sort are a first step to approach Borel $P$-summability which is a difficult phenomenon far from being understood, see \cite{mosc,CMS19}.

The aim of this paper is to study a higher order analogue to (\ref{Eq. Intrp k=1}), where once again, known results in the theory of singular PDEs fail to provide optimal bounds for the Gevrey type of formal solutions. For positive integers $d,N,k$, and complex coordinates  $\xx=(x_1,\dots,x_d)\in(\C^d,\00)$ and $\yy=(y_1,\dots,y_N)\in\C^N$, 
we pose the system of PDEs \begin{equation}\label{e1general}
	P(\xx)^{k}L_k(\yy)(\xx)+\cdots+P(\xx)L_1(\yy)(\xx)=F(\xx,\yy).
\end{equation}
$F$ is a $\C^N$-valued holomorphic map defined near $(\00,\00)\in\C^d\times\C^{N}$, and  
\begin{equation}\label{L1general}
	L_j:=\sum_{|\aa|=j} a_{\aa}^{(j)} (\xx) \d_{\aa},\qquad j=1,\dots,k,
\end{equation} are differential operators of order $j$ with holomorphic coefficients $a_{\aa}^{(j)}$ near $0\in\C^d$, see below for notations. 
Note that if $x$ approaches $Z_P$, the nature of (\ref{e1general}) changes from differential to implicit one. Moreover, if the linear part of $F$ at the origin $D_yF(0,0)$ is an invertible matrix, $P$ cannot be canceled from (\ref{e1general}), so its zero set is a  non-removable singular part of the equation. We mention that this equation is also inspired in its simple one-dimensional analogue $$\tau^k b_k(\tau)\d_\tau^k(u)+\cdots+\tau b_1(\tau)\d_\tau(u)=f(\tau,u),$$ familiar from point of view of Borel summability.

The previous work \cite{cahu} studied equation (\ref{Eq. Intrp k=1}) by direct recurrences, based on generalized Weierstrass division algorithms, and used modified Nagumo norms \cite{CDRSS} to establish the Gevrey type in $P$ of $\widehat{y}$. However, this approach left several questions opened. First, do formal solutions of these equations admit a canonical expansion in power series of $P$? Second, is it possible to treat the families (\ref{Eq. Intrp k=1}) with the standard methods for nonlinear singular PDEs and Newton polygons? Here we answer both questions affirmatively for the more general equation (\ref{e1general}). The method we explore here consists of adding a time variable $t\in(\C,0)$ to lift  (\ref{e1general}) to a system of PDEs in $t$ and $x$. The new system will have a unique solution of the form $\widehat{W}(t,x)=\sum_{n=0}^\infty y_n t^n$, where the $y_n$ are as in (\ref{Eq. y Intro}). This trick produces an equation were known results on singular PDEs can be effectively used to find the Gevrey order in $t$ of $\widehat{W}$, and thus the $P$-Gevrey order of $\widehat{y}(x)=\widehat{W}(P(x),x)$. Since the lifted equation determines the coefficients $y_n$ naturally, this procedure guarantees a canonically decomposition of $\widehat{y}$ as a power series in $P$.  The idea was suggested in \cite{cahu} by anonymous referees to whom we thank for their contribution. 

To state our results, we associate to $L_j$ and $P$ the holomorphic function  
\begin{equation}\label{e157}
L_j^{\star}(P):=\sum_{|\aa|=j} a_{\aa}^{(j)}(\xx) (\d_{x_1}P)^{\aa_1}\dots (\d_{x_d}P)^{\aa_d},\qquad j=1,\dots,k.
\end{equation} In particular,  $L^{\star}_1(P)$ is simply $L_1(P)$, but for $j\geq2$ these expressions generally differ. It turns out that these functions contain the key that leads to the existence, uniqueness, and Gevrey order for formal solutions of (\ref{e1general}).

\begin{theo}\label{teopral}
Consider the system of partial differential equations (\ref{e1general}) where $F(\00,\00)=\00$, and $D_yF(0,0)\in\textup{GL}_N(\C)$ is an invertible matrix. If $L_k\not\equiv 0$ and 
\begin{equation}\label{e138}
P\hbox{ divides }L^{\star}_j(P),\hbox{ for every } j=1,\dots,k,
\end{equation}
then equation (\ref{e1general}) admits a unique formal power series solution $\widehat{\yy}\in\C[[\xx]]^{N}$ with $\widehat{y}(0)=0$. Moreover, $\widehat{\yy}$ is a $P$-$k$-Gevrey series.
\end{theo}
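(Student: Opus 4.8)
\emph{Plan of the proof.} The idea is to add one variable $t\in(\C,0)$ and to lift \eqref{e1general} to a nonlinear PDE in $(t,\xx)$ that is singular along $t=0$, where Maillet-type techniques are available. On functions $W(t,\xx)$ set $\widetilde{\d}_i:=\d_{x_i}+(\d_{x_i}P)(\xx)\,\d_t$; these operators commute (they are $\d_{x_1},\dots,\d_{x_d}$ transported through the change of variables $(s,\xx)\mapsto(s+P(\xx),\xx)$), and the chain rule gives $\d_{x_i}[W(P(\xx),\xx)]=(\widetilde{\d}_iW)(P(\xx),\xx)$, hence $L_j\bigl(W(P(\cdot),\cdot)\bigr)=(\widetilde{L}_jW)(P(\cdot),\cdot)$ for $\widetilde{L}_j:=\sum_{|\aa|=j}a^{(j)}_{\aa}(\xx)\,\widetilde{\d}_1^{\,\aa_1}\cdots\widetilde{\d}_d^{\,\aa_d}$. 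Expanding each $\widetilde{\d}_i^{\,\aa_i}$ and collecting the terms of top order in $\d_t$ yields
\[
\widetilde{L}_j=L_j^{\star}(P)(\xx)\,\d_t^{\,j}+\widetilde{M}_j,
\]
with $L^{\star}_j(P)$ as in \eqref{e157} and $\widetilde{M}_j$ a holomorphic differential operator of order $\le j$ whose order in $\d_t$ is at most $j-1$.

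This is where \eqref{e138} is used. Writing $L_j^{\star}(P)=P\,b_j$ with $b_j$ holomorphic, consider the lifted system
\[
\sum_{j=1}^{k}\Bigl(t^{\,j+1}b_j(\xx)\,\d_t^{\,j}W+t^{\,j}\widetilde{M}_j(W)\Bigr)=F(\xx,W).
\]
From $P^{\,j}L_j^{\star}(P)=P^{\,j+1}b_j$ and the identities above one checks at once that if $\widehat{W}(t,\xx)$ solves this system, then $\widehat{\yy}(\xx):=\widehat{W}(P(\xx),\xx)$ solves \eqref{e1general}. The point of trading the coefficient $L_j^{\star}(P)$ for the power $t^{\,j+1}$ is that the lifted equation is now \emph{genuinely singular in $t$}, of weighted type with dominant part $t^{\,k+1}b_k(\xx)\,\d_t^{\,k}W$ --- the several-variables counterpart of the model $\tau^kb_k\d_\tau^ku+\cdots=f(\tau,u)$ recalled in the Introduction.

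One then solves the lifted system formally in $t$, writing $\widehat{W}=\sum_{n\ge0}y_n(\xx)t^n$. The coefficient of $t^0$ is $F(\xx,y_0)=\00$, which has a unique holomorphic solution $y_0$ with $y_0(0)=0$ by the implicit function theorem, using $F(\00,\00)=\00$ and $D_yF(0,0)\in\textup{GL}_N(\C)$. For $m\ge1$ the shifts produced by $t^{\,j+1}\d_t^{\,j}$ and by $t^{\,j}\widetilde{M}_j$ make each such term contribute to the coefficient of $t^m$ only through $y_{m-1},\dots,y_{m-k}$ and their $\xx$-derivatives, never through $y_m$; hence $y_m$ is governed by $-D_yF(\xx,y_0(\xx))\,y_m=R_m$, where $R_m$ depends polynomially on the $\d_{\xx}^{\gamma}y_{m'}$ with $m'<m$ and on finitely many Taylor coefficients of $F$. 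Since $D_yF(\xx,y_0(\xx))$ is invertible on a fixed polydisc $D$, this determines all $y_n\in\mathcal{O}(D)^N$ uniquely, hence the formal solution $\widehat{\yy}=\widehat{W}(P(\cdot),\cdot)$ of \eqref{e1general} with $\widehat{\yy}(0)=0$. (Uniqueness for \eqref{e1general} itself can also be seen directly: $P(0)=0$ makes each $P^{\,j}L_j$ send a homogeneous part of degree $n$ to a series of degree $\ge n$, and \eqref{e138} turns the induced degree-$n$ operators into nilpotent perturbations of $-D_yF(0,0)$, so the homogeneous components of any solution with $\widehat{\yy}(0)=0$ are uniquely forced.)

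What remains --- and this is the real difficulty --- is the Gevrey estimate $\sup_D|y_n(\xx)|\le CA^n\,n!^{k}$, which by the definition recalled in \eqref{Eq. y Intro} displays $\widehat{\yy}(\xx)=\widehat{W}(P(\xx),\xx)=\sum_{n\ge0}y_n(\xx)P(\xx)^n$ as a $P$-$k$-Gevrey series and so finishes the proof. I would obtain it by a majorant argument on the recursion $-D_yF(\xx,y_0)\,y_m=R_m$: the factorials come from the numbers $\tfrac{(m-1)!}{(m-1-j)!}=(m-1)\cdots(m-j)\le m^{k}$ produced by $\d_t^{\,j}$ (the index $j=k$ being the one responsible for order exactly $k$), each $\xx$-derivative of order $\le k$ coming from $\widetilde{M}_j$ is absorbed at the cost of a further factor $O(m)$ through modified Nagumo norms on a nested family of polydiscs exhausting $D$ (cf.\ \cite{CDRSS}, as in \cite{cahu}), and the nonlinear dependence on $F$ is controlled in the standard way by a majorant series together with the implicit-function bound for $y_0$. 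Iterating, $\|y_m\|$ grows at most like $A^mm!^{k}$, i.e.\ $\widehat{W}$ is $k$-Gevrey in $t$ with coefficients holomorphic on the common polydisc $D$. The whole difficulty lies in keeping $C$ and $A$ uniform in $n$ while the factorial growth from $\d_t$, the loss from the $\xx$-derivatives, and the nonlinearity interact simultaneously --- which is exactly the situation that the singular-in-$t$ form constructed above is designed to handle.
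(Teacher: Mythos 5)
Your lift is, at bottom, the same trick the paper uses --- introduce a time variable $t$, use \eqref{e138} to trade the coefficient $L_j^\star(P)$ for the extra power in $t^{j+1}b_j\partial_t^j$, and solve order by order in $t$ --- and your implementation through the conjugated operators $\widetilde{\partial}_i=\partial_{x_i}+(\partial_{x_i}P)\partial_t$ is clean and correct: the substitution $t=P(\xx)$ does return a solution of \eqref{e1general}, and the recursion $D_yF(\xx,y_0(\xx))y_m=-R_m$, with $y_0$ from the implicit function theorem, does determine all $y_n$ holomorphically on a fixed polydisc. Your parenthetical direct-uniqueness argument can also be made to work, but it is only a sketch: one must observe that for $o(P)\geq 2$ the degree-by-degree recursion is strictly triangular, while for $o(P)=1$ condition \eqref{e138} at lowest degree kills the coefficient of $\partial_{x_1}^j$ (in linear coordinates where the linear part of $P$ is $x_1$), so the induced degree-$n$ operator strictly raises the $x_1$-exponent and is nilpotent, whence invertibility of the diagonal maps.

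The genuine gap is the Gevrey-$k$ estimate, i.e. the ``Moreover'' clause, which is the actual content of the theorem: you only announce that you ``would obtain it by a majorant argument'' with modified Nagumo norms and yourself flag that the whole difficulty is keeping $C$ and $A$ uniform. That is exactly the part the paper does not leave open: it first normalizes the lifted equation (peeling off $y_0,\dots,y_{k-1}$ by iterated implicit function theorem, using Lemma \ref{Lema412} to keep the coefficients holomorphic, so that the inhomogeneity carries a factor $t^k$ and every linear term on the right carries a positive power of $t$) so as to match the singular form \eqref{Eq. Main GT}, and then invokes the adapted G\'erard--Tahara Maillet-type theorem (Theorem \ref{Thm. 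Gerard-Tahara1}), which is what actually supplies the uniform majorant bounds and the value $s=k$ via \eqref{Eq. s}. Your exponent bookkeeping ($m^{l}$ from $\partial_t^{l}$, one factor of $m$ per $x$-derivative, against the gain coming from $y_{m-j+l}$ with $j-l\geq 1$ and $j\leq k$) is the right heuristic, but executing it with the nonlinearity, the shrinking radii, and uniform constants is a substantial argument, not a remark. The quickest repair inside your setup: subtract $y_0$ so the inhomogeneity gains a factor of $t$, check that the terms $t^{j+1}b_j\partial_t^{j}$ and $t^{j}\widetilde{M}_j$ then satisfy the hypotheses of Theorem \ref{Thm. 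Gerard-Tahara1} with $p=0$ (all linear coefficients vanish at $t=0$, nonlinearities are at least quadratic), and read off $s=\max\bigl\{j,\ (l+|\beta|)/(j-l)\bigr\}\leq k$ over the terms present; without this, or a fully executed Nagumo-norm scheme, the $P$-$k$-Gevrey bound remains unproved.
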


On the other hand, if $0$ is not a singular point for $P$, i.e., $\d_{x_l}P(0)\neq0$ for some $l$, theproblem changes and Poincar\'{e} type conditions appear to guarantee existence and uniqueness of solutions. In fact, we obtain an analytic solution.

\begin{theo}\label{Thm. Convergent} Consider  (\ref{e1general}) where $F(\00,\00)=\00$ and $D_yF(0,0)\in\textup{GL}_N(\C)$. If $L_k^\star(P)(0)\neq 0$ and

\begin{equation}\label{Eq. Condition Thm2}
\left[\sum_{j=1}^k  \frac{n!}{(n-j)!}L_j^\star(P)(0)\right] I_N - D_yF(0,0)\in\textup{GL}_N(\C),\, \text{ for all } n\geq 0,
\end{equation} then (\ref{e1general}) has a unique analytic solution $\widehat{y}\in \C\{x\}^N$ with $\widehat{y}(0)=0$. Here $I_N\in\C^{N\times N}$ is the identity matrix.
\end{theo}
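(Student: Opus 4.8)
The plan is to follow the same lifting strategy used for Theorem~\ref{teopral}, but now exploit that $P$ is a submersion at the origin to make the lifted equation genuinely non-singular (Cauchy--Kovalevskaya type), so that the formal solution actually converges. Concretely, introduce a time variable $t\in(\C,0)$ and seek a solution of the form $\widehat{W}(t,x)=\sum_{n\ge 0} y_n(x)\,t^n$ of the lifted system, designed so that substituting $t=P(x)$ recovers a solution $\widehat{y}(x)=\widehat{W}(P(x),x)$ of \eqref{e1general}. The divisibility hypotheses \eqref{e138} needed in Theorem~\ref{teopral} are replaced here by the invertibility condition \eqref{Eq. Condition Thm2}; the point is that when $P(0)\neq 0$ in the gradient sense, i.e. $\partial_{x_l}P(0)\neq 0$ for some $l$, one does not need $P\mid L_j^\star(P)$ to solve the recursion, one only needs the linear operator acting on each $y_n$ to be invertible, and that is exactly what \eqref{Eq. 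Condition Thm2} encodes.

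First I would write down the recursion for the coefficients $y_n$. Plugging $\widehat{y}=\widehat{W}(P(x),x)$ into \eqref{e1general} and using the chain rule, each term $P^j L_j(\widehat y)$ produces, at leading order in the expansion in powers of $P$, the contribution $L_j^\star(P)\cdot \tfrac{n!}{(n-j)!}\,y_n\,P^n$ plus lower-order-in-the-recursion terms involving $y_0,\dots,y_{n-1}$ and derivatives of the $y_m$ for $m<n$; the right-hand side $F(x,\widehat y)$ contributes $D_yF(0,0)\,y_n\,P^n$ at the linear level plus a polynomial in $y_0,\dots,y_{n-1}$. Comparing coefficients of $P^n$ gives
\begin{equation}\label{Eq. recursion plan}
\left(\Big[\sum_{j=1}^k \frac{n!}{(n-j)!}L_j^\star(P)(x)\Big] I_N - D_yF(x,\cdot)\right) y_n = (\text{known from } y_0,\dots,y_{n-1}),
\end{equation}
at least formally; the precise bookkeeping of how $L_j$ acts on a product $\sum y_m P^m$ via Leibniz and the chain rule is the routine-but-delicate part. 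Evaluating the leading matrix at $x=0$ and invoking \eqref{Eq. Condition Thm2} shows the recursion is uniquely solvable for $y_n$, first as a formal object and then — since $L_k^\star(P)(0)\neq 0$ guarantees the leading matrix is invertible in a whole neighborhood for all large $n$, after shrinking — as genuine holomorphic germs on a common polydisc. This yields existence and uniqueness of the formal series $\widehat y$.

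The convergence of $\widehat y$ is where the real work lies, and it is the step I expect to be the main obstacle. The strategy is to show the lifted series $\widehat W(t,x)=\sum y_n(x) t^n$ has positive radius of convergence in $(t,x)$ jointly, because then $\widehat y(x)=\widehat W(P(x),x)$ is a convergent composition, hence analytic with $\widehat y(0)=0$. To get this I would reorganize the lifted equation as a first-order evolution problem in $t$ whose principal part, thanks to $L_k^\star(P)(0)\neq 0$, is non-characteristic, and whose characteristic matrix $\big[\sum_j \tfrac{n!}{(n-j)!}L_j^\star(P)(0)\big]I_N-D_yF(0,0)$ stays invertible and (after dividing) grows polynomially in $n$; this is precisely the Poincaré-type non-resonance condition that prevents the small-divisor phenomenon responsible for divergence in Theorem~\ref{teopral}. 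Then a majorant-series / fixed-point argument (Cauchy--Kovalevskaya style, or an application of an implicit function theorem in a Banach space of analytic germs with geometric weights in $t$) produces the needed geometric bounds $\sup_{x\in D}|y_n(x)|\le CA^n$, which is the $k=0$, i.e. convergent, endpoint of the $P$-$k$-Gevrey scale. The only subtlety is making the weights uniform in $n$: the factor $\tfrac{n!}{(n-j)!}\sim n^j$ multiplies $y_n$ on the left of \eqref{Eq. recursion plan}, so after inverting it contributes a gain like $n^{-k}$, which is more than enough to close a convergent majorant estimate rather than a Gevrey one — this is the structural reason the answer here is analyticity and not merely $P$-$k$-Gevrey.

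Finally I would check the edge cases: that \eqref{Eq. Condition Thm2} at $n=0$ is exactly the solvability of the equation for $y_0$ (which must vanish since $F(0,0)=0$, giving $\widehat y(0)=0$), and that the uniqueness claim follows because any formal solution must have coefficients in its $P$-adic expansion satisfying the same recursion \eqref{Eq. recursion plan} — here one uses that $P$ being a submersion makes the $P$-adic expansion of a germ well-defined and unique after choosing a complementary coordinate, a fact that can be borrowed from the formal/convergent Weierstrass-type setup already developed in \cite{mosc,cahu}.
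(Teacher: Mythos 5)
Your overall strategy points in the right direction and you identify the correct mechanism --- invertibility of $\bigl[\sum_{j}\tfrac{n!}{(n-j)!}L_j^\star(P)\bigr]I_N-D_yF$ together with the $n^{-k}$ gain it yields against the loss of at most $k$ derivatives per step --- but the decisive step is missing. The convergence of the coefficients, which you yourself call ``where the real work lies,'' is only asserted: you appeal to a ``Cauchy--Kovalevskaya style'' majorant or fixed-point argument for a ``non-characteristic first-order evolution problem in $t$,'' but the lifted equation is not of that type. After substituting $\widehat W(t,x)=\sum y_n(x)t^n$, the left-hand side reads $\sum_{j=1}^k L_j^\star(P)(x)\,t^j\partial_t^jW+\cdots$, which is regular singular (Fuchsian) at $t=0$; Cauchy--Kovalevskaya does not apply there, and what is needed is exactly a singular-PDE existence theorem under a Poincar\'e-type condition, i.e.\ Theorem \ref{Thm. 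Gerard-Tahara1} with $p=k$, or a majorant estimate equivalent to it, which you neither invoke nor carry out. A second, smaller flaw: the recursion you obtain by ``comparing coefficients of $P^n$'' with coefficients $y_n$ depending on all of $x$ is not well defined as stated, because the expansion of a series in powers of $P$ is not unique; you only repair this at the very end by choosing a coordinate complementary to $P$, which is in fact where the argument should begin.

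The paper's proof closes these gaps with tools already in place: from $L_k^\star(P)(0)\neq0$ one gets $\partial_{x_l}P(0)\neq0$, so after the change of variables $\xi_1=P(x)$, $\xi_j=x_j$ (with the chain-rule bookkeeping of Lemma \ref{Lema 4}) equation (\ref{e1general}) becomes (\ref{Eq. Aux Thm2}), which has the form (\ref{Eq. Main GT}) with $p=k$ and $c_j=L_j^\star(P)I_N$; Remark \ref{Rrm Stirling} converts $\xi_1^j\partial_{\xi_1}^j$ into the operators $(\xi_1\partial_{\xi_1})^l$, the hypotheses $L_k^\star(P)(0)\neq 0$ and (\ref{Eq. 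Condition Thm2}) together are precisely condition (\ref{Eq. cpc0 invertible}), and the exponent formula (\ref{Eq. Gevrey class}) gives $s=0$ for every term on the right because the total derivative order never exceeds $k=p$, hence a convergent solution. Your heuristic about the gain $n^{-k}$ is exactly what estimate (\ref{Eq. Bound inverses}) with $p=k$ encodes inside that theorem, so the intuition is sound; to make your lifted-variable route rigorous you would either apply Theorem \ref{Thm. Gerard-Tahara1} with $p=k$ to the lifted equation (which works, and essentially reproduces the paper's proof with one extra variable) or re-prove the majorant estimate yourself --- neither of which your proposal does.
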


We stress that the current technique can be applied to concrete equations and it is an idea worth exploring for future works. For instance, problems involving non-linear terms in the derivatives of $u$. In fact, obtaining $P$-Gevrey estimates for solutions of these problems is likely to be inaccessible by a direct approach.

The plan for the paper is as follows. Section \ref{Sec. Gevrey series} recalls the basics on Gevrey series in several variables and  $P$-Gevrey series, including a natural relation between them (Proposition \ref{Prop. Ps sss Gevrey}). The necessary tools to prove Theorems \ref{teopral} and \ref{Thm. Convergent} are developed in Sections \ref{Sec. A preliminary Maillet-type theorem} and \ref{Sec. Some technical results}. First, we give a Maillet type theorem for singular PDEs adapted for our purposes (Theorem \ref{Thm. Gerard-Tahara1}), and then several lemmas of elementary nature. The main results are proved in Section \ref{Sec. The proof of the main results}. 
The case $k=1$ is particularly simple and we include it in Corollaries \ref{teo2} and \ref{Coro 2} hoping that its proof helps to elucidate the ideas. The work concludes in Section \ref{Sec. Examples} with examples where the Gevrey type given by Theorem \ref{teopral} is attained.

\

\textbf{Notation}. $\N$ denotes the set of non-negative integers and $\N^\ast:=\N\setminus\{0\}$. For $d\in\N^\ast$,    $\aa=(\alpha_1,\ldots,\alpha_d), \bb=(\beta_1,\ldots,\beta_d)\in\N^d$, and $s=(s_1,\dots,s_d)\in\R_{\geq 0}$ we set  $$\aa+\bb=(\alpha_1+\beta_1,\ldots,\alpha_d+\beta_d),\quad   |\aa|=\alpha_1+\cdots+\alpha_d,\quad \aa!^s=\alpha_1!^{s_1}\dots\alpha_d!^{s_d}.$$ We write $\aa\le\bb$ if $\alpha_j\le \beta_j$, for all $1\le j\le d$, and $\aa<\bb$ if $\aa\le\bb$ and there is $1\le j_0\le d$ such that $\alpha_{j_0}<\beta_{j_0}$. If $\beta\leq \a$, we put  $\binom{\a}{\beta}=\binom{\a_1}{\beta_1}\cdots\binom{\a_d}{\beta_d}$. The symbol $\00$ stands for a vector with zero components. For $1\le j\le d$,  $\ee_j\in\N^d$ is the tuple with all its components being zero, except the position $j$ which is $1$. 

We work in $(\C^d,\00)$ with local coordinates $\xx=(x_1,\dots,x_d)$. If $\aa\in\N^{d}$, let $$\xx^{\aa}=x_1^{\alpha_1}\cdots x_d^{\alpha_d},\qquad \partial_{x_j}:=\partial_{\ee_j,\xx},\text{ and }\quad  \partial_{\aa,x}=\partial_{\aa}=\frac{\partial^{|\aa|}}{\partial x_1^{\a_1}\cdots \partial x_d^{\a_d}}.$$ In the former case we omit the $x$ when the variables are identified from the context. Given a complex Banach space $({E},\|\cdot\|)$, we write ${E}[[\xx]]$ and ${E}\{\xx\}$ for the spaces of formal and convergent power series in $\xx$ with coefficients in $E$, respectively. In our context, $E$ will be $\C^N$ or an adequate space of functions. If ${E}=\C$ we simply write $\widehat{\mathcal{O}}=\C[[\xx]]$ and  $\mathcal{O}=\C\{\xx\}$. $\mathcal{O}^\ast=\{U\in\mathcal{O} \,:\, U(\00)\neq 0\}$ is the group of units. 

Given $\hat{f}=\sum a_\bb \xx^\bb\in\widehat{\mathcal{O}}$, $o(\hat{f})$ denotes its order: if $\hat{f}=\sum_{n=0}^\infty {f}_n$, ${f}_n=\sum_{|\bb|=n} a_\bb \xx^\bb$, is written as sum of its homogeneous components, $o(\hat{f})$ is the least integer $k$ for which ${f}_k\neq 0$. Given a polyradius $R=(R_1,\ldots,R_d)\in \R_{>0}^{d}$, we write $$D_{R}:=\{x\in\mathbb{C}^{d}: |x_j|<R_j, j=1,\ldots d\},$$ for such polydisc. If $R=(r,\dots,r) $, $r>0$, we also write $D_R=D_r^d$ as the Cartesian product of one-dimensional discs. For $N\in\N^\ast$ we set  $\mathcal{O}(\Omega,\mathbb{C}^{N})$ (resp. $\mathcal{O}_b(\Omega,\mathbb{C}^{N})$) for the set of $\mathbb{C}^{N}$-valued holomorphic (resp. and bounded) functions  on an open domain $\Omega\subseteq\mathbb{C}^{d}$. We  write  $\mathcal{O}(\Omega):=\mathcal{O}(\Omega,\mathbb{C})$ and $\mathcal{O}_b(\Omega):=\mathcal{O}_b(\Omega,\mathbb{C})$ for short. Note that $\mathcal{O}_b(\Omega,\mathbb{C}^{N})$ endowed with the supremum norm is a Banach space.

\section{Gevrey series}\label{Sec. Gevrey series}

We start by recalling the main facts on Gevrey series in several variables and with respect to germs of analytic functions. In particular, we include a relation between these notions which was first obtained in the proceeding article \cite{cahu}.

\begin{defin}\label{def132} Let $E$ be a  complex Banach space and $s=(s_1,\dots,s_d)\in\R_{\geq 0}^d$. A series $\hat{f}=\sum_{\bb\in\N^d} a_\bb \xx^\bb\in E[[\xx]]$ is \textit{$s$-Gevrey} if we can find $C,A>0$ such that $$\|a_\bb\|\leq CA^{|\bb|} \bb!^{s},\quad \text{ for all } \bb\in\N^d.$$ Equivalently, $\sum_{\bb\in\N^d} a_\bb \xx^\bb/\bb!^s\in E\{x\}$. Note that $s=\00$ means convergence. In the case $p=s_1=\dots=s_d\geq 0$, since $\bb!\leq |\bb|!\leq d^{|\bb|}\bb!$, $\hat{f}$ is $(p,\dots,p)$-Gevrey if and only if there are $C,A>0$ such that \[\|a_\bb\|\leq CA^{|\bb|} |\bb|!^p,\quad \bb\in\N^d.\] 
\end{defin}

We denote by $E[[x]]_{s}$ the set of $s$-Gevrey series with coefficients in $E$. This space is closed under sums and partial derivatives, and it contains $E\{x\}$. It is also closed under products when $E$ is a Banach algebra. Moreover, it is stable under linear changes of variables, see \cite[Lemma 2.1]{Hibino99}.

\begin{lemma}\label{Lemma linear change} Given $p\geq 0$, $\hat{f}(\xx)\in E[[\xx]]_{(p,\dots,p)}$ if and only if $\hat{f}(M\xx)\in E[[\xx]]_{(p,\dots,p)}$, for all $M\in\textup{GL}_d(\C)$.
\end{lemma}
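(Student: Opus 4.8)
The plan is to reduce everything to a single degree-by-degree estimate. Since a linear substitution preserves homogeneity, if we write $\hat f=\sum_{n\ge 0}f_n$ with $f_n$ the homogeneous component of degree $n$, then $\hat f(Mx)=\sum_{n\ge 0}f_n(Mx)$ with $f_n(Mx)$ again homogeneous of degree $n$; hence it is enough to bound the coefficients of $f_n(Mx)$ in terms of those of $f_n$, uniformly in $n$. Moreover, it suffices to prove only the implication ``$\hat f\in E[[x]]_{(p,\dots,p)}\Rightarrow \hat f(Mx)\in E[[x]]_{(p,\dots,p)}$'' for an arbitrary $M\in\textup{GL}_d(\C)$: the reverse implication follows by applying this to the series $\hat f(Mx)$ and the matrix $M^{-1}$, using $\bigl(\hat f(Mx)\bigr)(M^{-1}x)=\hat f(x)$, and the case $M=I_d$ then yields the stated equivalence.

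For the estimate I would work with the $\ell^1$-norm of coefficients. For a homogeneous polynomial $g=\sum_{|\beta|=n}b_\beta x^\beta$ with $b_\beta\in E$ put $\|g\|_1:=\sum_{|\beta|=n}\|b_\beta\|$; on scalar polynomials $\|\cdot\|_1$ is submultiplicative, and $\|b\,Q\|_1=\|b\|\,\|Q\|_1$ for $b\in E$ and $Q\in\C[x]$. Using that the number of $\beta\in\N^d$ with $|\beta|=n$ is $\binom{n+d-1}{d-1}\le 2^{n+d-1}$ together with $\beta!\le|\beta|!\le d^{|\beta|}\beta!$, recorded in Definition \ref{def132}, one checks that $\hat f\in E[[x]]_{(p,\dots,p)}$ if and only if there are $C,A>0$ with $\|f_n\|_1\le CA^n n!^p$ for every $n$. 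Writing $M=(m_{ij})$, the substitution sends $x^\beta$ with $|\beta|=n$ to $\prod_{i=1}^d\bigl(\sum_{j=1}^d m_{ij}x_j\bigr)^{\beta_i}$, a homogeneous polynomial of degree $n$ whose $\|\cdot\|_1$-norm is at most $\prod_{i=1}^d\bigl(\sum_{j=1}^d|m_{ij}|\bigr)^{\beta_i}\le c(M)^n$, where $c(M):=\max_i\sum_j|m_{ij}|$. Summing over $\beta$ with the triangle inequality gives $\|f_n(Mx)\|_1\le c(M)^n\|f_n\|_1$, so from $\|f_n\|_1\le CA^n n!^p$ we obtain $\|f_n(Mx)\|_1\le C\bigl(c(M)A\bigr)^n n!^p$, that is, $\hat f(Mx)\in E[[x]]_{(p,\dots,p)}$.

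I do not expect a genuine obstacle here: the only points requiring a little care are the bookkeeping showing that $\|\cdot\|_1$ is submultiplicative on scalar homogeneous polynomials and behaves correctly under multiplication by constants in $E$, and the equivalence between the $(p,\dots,p)$-Gevrey condition for $\hat f$ and the geometric-times-$n!^p$ growth of $\|f_n\|_1$ rather than of the individual norms $\|b_\beta\|$; both are elementary. Everything else is the single direct estimate displayed above.
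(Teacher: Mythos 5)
Your argument is correct, but note that the paper contains no proof of this lemma to compare against: it is simply quoted from \cite[Lemma 2.1]{Hibino99}. Your proposal therefore supplies a self-contained elementary proof. The reduction to a single implication is sound (the converse is immediate by taking $M=I_d$, or by applying the direct implication to $\hat f(M\xx)$ and $M^{-1}$), and the degree-by-degree estimate goes through as you describe: the substitution preserves homogeneity, the coefficientwise $\ell^1$-norm is submultiplicative on scalar polynomials and satisfies $\|b\,Q\|_1=\|b\|\,\|Q\|_1$, so $\|f_n(M\xx)\|_1\le c(M)^n\|f_n\|_1$ with $c(M)=\max_i\sum_j|m_{ij}|>0$, while the equivalence between the $(p,\dots,p)$-Gevrey condition and the growth $\|f_n\|_1\le CA^n n!^p$ follows from the characterization $\|a_\bb\|\le CA^{|\bb|}|\bb|!^p$ recorded in Definition \ref{def132} together with the count $\binom{n+d-1}{d-1}\le 2^{n+d-1}$ of monomials of degree $n$. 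One point worth making explicit is that your proof genuinely uses the isotropy of the class: the factor $c(M)^n$ acts on the whole degree-$n$ block, which is compatible with the weight $n!^p$ precisely because it depends only on the total degree; for anisotropic indices $s=(s_1,\dots,s_d)$ with distinct entries the statement is false, and your argument correctly does not extend to that case.
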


Consider now a germ $P$ at $\00\in\C^d$ of a $\C$-valued holomorphic function, i.e., an element $P\in\mathcal{O}\setminus\{0\}$, and assume that $P(\00)=0$. There are equivalent definitions for Gevrey series with respect to $P$, with coefficients in $E$, see \cite{CMS19,mosc}. We focus on the case $E=\C$ and follow the simple characterization given in \cite[Lemma 4.1]{CMS19}. 

\begin{defin}\label{Def.Gevrey} Given $s\geq 0$, $\hat{f}\in\widehat{\mathcal{O}}$ is said to be a \textit{$P$-$s$-Gevrey series} if there is a polyradius $r$, constants $C,A>0$ and a sequence $\{f_n\}_{n\in\N}\in\mathcal{O}_b(D_\rr)$ such that \begin{equation}\label{Def. P s Gevrey}
	\hat{f}=\sum_{n=0}^\infty f_n P^n,\quad \text{ where }  \sup_{\xx\in D_r}|f_n(\xx)|\leq CA^n n!^s.
	\end{equation} We will use the notation $\widehat{\mathcal{O}}^{P,s}$ for the set of $P$-$s$-Gevrey series. A series  $(\hat{f}_1,\dots,\hat{f}_N)\in\widehat{\mathcal{O}}^N$ is $P$-$s$-Gevrey if every component is so.
\end{defin}

\begin{nota} The expansion (\ref{Def. P s Gevrey}) is not unique. In fact, for each injective linear form $\ell:\N^d\to\R$ there is one such decomposition via a generalized Weierstrass division theorem, see \cite{mosc,cahu}. In general, the $f_n$ obtained from $\hat{f}$ under this process are merely formal power series. Therefore, in our definition we are implicitly assuming that these coefficients are convergent in a common polydisc at $\00\in\C^d$. Moreover, the growth of $f_n$ does not dependent on the decomposition used, thus the notion of $P$-$s$-Gevrey series is well-defined, see \cite[Lemma 4.1]{CMS19} for details.
\end{nota}

The following properties are valid for $s\geq0$ and $P,Q\in\mathcal{O}\setminus\{0\}$ such that $P(\00)=Q(\00)=0$, c.f., \cite[Corollary 4.2, Lemma 4.3]{CMS19}:

\begin{enumerate}
	\item $\widehat{\mathcal{O}}^{P,s}$ is stable under sums, products and partial derivatives, and $\mathcal{O}\subset \widehat{\mathcal{O}}^{P,s}$.
	
	\item For any $k\in\N^\ast$, $\widehat{\mathcal{O}}^{P^k,ks}=\widehat{\mathcal{O}}^{P,s}$.
	
	\item If $Q$ divides $P$, then $\widehat{\mathcal{O}}^{P,s}\subseteq \widehat{\mathcal{O}}^{Q,s}$. In particular, if $Q=U\cdot P$, $U\in\mathcal{O}^\ast$, then $\widehat{\mathcal{O}}^{P,s}=\widehat{\mathcal{O}}^{Q,s}$. 
	
	\item Let $\phi:(\C^d,\00)\to(\C^d,\00)$ be analytic, $\phi(\00)=\00$, and assume $P\circ \phi$ is not identically zero. If $\hat{f}\in \widehat{\mathcal{O}}^{P,s}$, then $\hat{f}\circ \phi\in \widehat{\mathcal{O}}^{P\circ \phi,s}$.
	
	\item If $P(\xx)=\xx^\aa$, $\aa\in\N^d\setminus\{\00\}$, then $\hat{f}=\sum a_{{\beta}}{x}^{{\beta}}\in \widehat{\mathcal{O}}^{\xx^\aa,s}$ if and only if there are constants $C,A>0$ satisfying \begin{equation}\label{Eq. Gevrey bounds monomials}
	|a_{{\beta}}|\leq CA^{|{\beta}|}\min\{ \beta_j!^{s/\a_j} : j=1,\dots,d, \a_j\neq 0\} ,\quad {\beta}\in\N^d.
	\end{equation}  Note that the variables $x_j$ for which $\a_j=0$ can be regarded as regular parameters.
\end{enumerate}

The previous statement characterizes $P$-$s$-Gevrey series when $P$ is a monomial, directly from the growth of the coefficients of the series. Although it is not yet known whether a similar property is true for an arbitrary $P$, we have the following result from \cite[Proposition 3]{cahu} that we include for the sake of completeness.

\begin{prop}\label{Prop. Ps sss Gevrey} Consider $P\in \mathcal{O}$ with $o(P)=k\geq 1$. Then, a $P$-$s$-Gevrey series is a $(s/k,\dots,s/k)$-Gevrey series. 
\end{prop}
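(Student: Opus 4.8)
The plan is to extract, from the $P$-$s$-Gevrey decomposition of $\widehat f$, explicit growth bounds on the Taylor coefficients of $\widehat f$ and then recognize these as $(s/k,\dots,s/k)$-Gevrey bounds. First I would fix a decomposition $\widehat f=\sum_{n\ge 0} f_n P^n$ with $f_n\in\mathcal O_b(D_r)$ and $\sup_{D_r}|f_n|\le CA^n n!^s$, which exists by Definition \ref{Def.Gevrey}. Since $o(P)=k\ge 1$, the homogeneous component of $\widehat f$ of degree $m$ only receives contributions from those terms $f_n P^n$ with $kn\le m$, i.e. $n\le m/k$. So if I write $\widehat f=\sum_{m\ge 0} g_m$ with $g_m$ homogeneous of degree $m$, then $g_m=\sum_{n\le m/k}(f_n P^n)_m$, where $(\cdot)_m$ denotes the degree-$m$ homogeneous part.

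Next I would estimate the coefficients of $g_m$ via Cauchy estimates. Each $f_n$ is holomorphic and bounded by $CA^n n!^s$ on the polydisc $D_r$, and $P$ is holomorphic near $0$ with, say, $|P(x)|\le M$ on some polydisc $D_\rho$ with $\rho\le r$; shrinking if necessary we may work on a common polydisc $D_{\rho}$, $\rho=(\rho,\dots,\rho)$. Then $f_n P^n$ is holomorphic on $D_\rho$ with sup norm at most $CA^n n!^s M^n$, so by the Cauchy inequalities any degree-$m$ Taylor coefficient $a_\beta$ of $f_nP^n$ (with $|\beta|=m$) satisfies $|a_\beta|\le C A^n n!^s M^n \rho^{-m}$. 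Summing over $n\le m/k$ (at most $m/k+1$ terms, and bounding $n!^s\le (m/k)!^{\,s}$, $A^nM^n\le (1+AM)^{m}$, absorbing geometric and polynomial factors), I get a bound of the shape $|a_\beta|\le C' (A')^{m}\,(\lfloor m/k\rfloor!)^{s}$ for all $\beta$ with $|\beta|=m$. Using the elementary inequality $\lfloor m/k\rfloor!\le (m!)^{1/k}$ (up to a constant $D^m$, which follows from $m!\le (\lfloor m/k\rfloor!)^{k}k^{m}$, itself a consequence of grouping $m$ objects into $\lceil m/k\rceil$ blocks of size $\le k$), this becomes $|a_\beta|\le C'' (A'')^{|\beta|}\,(|\beta|!)^{s/k}$. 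By the last sentence of Definition \ref{def132}, a bound of the form $|a_\beta|\le C A^{|\beta|}|\beta|!^{p}$ is exactly equivalent to $\widehat f$ being $(p,\dots,p)$-Gevrey, so with $p=s/k$ this gives the claim.

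The main obstacle I expect is purely bookkeeping: making the passage from $\lfloor m/k\rfloor!^{s}$ to $(m!)^{s/k}$ clean, and controlling the number of summands and the various geometric/exponential factors $A^n$, $M^n$, $\rho^{-m}$ so that everything collapses into a single $C'' (A'')^{|\beta|}$ prefactor. None of these steps is deep — the only genuinely quantitative lemma is the factorial comparison $\lfloor m/k\rfloor!^{k}k^{m}\ge m!$, which I would state and prove in one line by writing $m=qk+\nu$ with $0\le\nu<k$ and grouping the product $m!=\prod_{i=1}^{m} i$ into $q$ (or $q+1$) consecutive blocks each of length $\le k$, each block being bounded below by (a shift of) a factorial of size $\le q$ times a power of $k$. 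The rest is the standard Cauchy-estimate argument, and the convergence of $\sum a_\beta x^\beta/\beta!^{s/k}$ follows at once from the exponential bound.

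Alternatively, and perhaps more slickly, one can avoid homogeneous components altogether: substitute into $\widehat f=\sum f_n P^n$ the estimate coming from property (2) of $P$-$s$-Gevrey series together with Proposition-type monomial results — but since $P$ is a general germ, not a monomial, I would stick with the direct Cauchy-estimate route described above, as it does not need any normal form for $P$.
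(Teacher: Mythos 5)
Your argument is correct and takes a genuinely different route from the paper. You work directly from one decomposition $\hat f=\sum_n f_nP^n$ as in Definition~\ref{Def.Gevrey}, note that only the terms with $kn\le|\beta|$ contribute to a given Taylor coefficient (since $o(P^n)=kn$), and then combine Cauchy estimates on a common polydisc with the factorial comparison $\lfloor m/k\rfloor!\le (m!)^{1/k}$ to land on the coefficient bound of Definition~\ref{def132}. This is elementary and self-contained. The paper instead reduces to the monomial case: a linear change of coordinates makes the lowest-degree part of $P$ nonzero in the direction $(1,0,\dots,0)$, the blow-up $x_1=z_1,\ x_j=z_1z_j$ then writes $P$ as $z_1^k$ times a unit, so by the stability properties of $P$-$s$-Gevrey classes the transformed series is $z_1$-$s/k$-Gevrey; the coefficient estimate is read off from the expansion in $z_1$ and transported back with Lemma~\ref{Lemma linear change}. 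The paper's route buys a structural reduction to the already-characterized monomial case, while yours avoids the blow-up and the stability properties (2)--(4) entirely at the cost of some bookkeeping with constants.

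One small correction: your justification of the factorial comparison is stated backwards. The inequality $m!\le(\lfloor m/k\rfloor!)^kk^m$ would give a \emph{lower} bound on $\lfloor m/k\rfloor!$, whereas what your estimate requires is the upper bound $(\lfloor m/k\rfloor!)^k\le m!$. Fortunately this is the trivial direction: writing $q=\lfloor m/k\rfloor$, one has $(q!)^k\le (qk)!\le m!$ because the multinomial coefficient $\binom{qk}{q,\dots,q}$ is at least $1$. So the comparison holds exactly, with no extra factor $D^m$, and the rest of your estimate goes through as described.
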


\begin{proof} Writing $P=\sum_{j=k}^\infty P_j$ as sum of homogeneous polynomials, where $P_k\neq 0$, take ${a}\in\C^d$ such that $P_k({a})\neq 0$, and choose $A\in\text{GL}_n(\C)$ having ${a}$ as first column. If we set $Q(\xx)=P(A\xx)$ and we write it as sum of its homogeneous components $Q=\sum Q_j$, then $Q_j(\xx)=P_j(A\xx)$, and $Q_k(\xx)=P_k({a})x_1^k+\cdots$, i.e., $o(Q)=k$ and $Q_k(1,0,\dots,0)\neq 0$.

Given a $P$-$s$-Gevrey series $\hat{f}$, the series $\hat{f}_0(\xx)=\hat{f}(A\xx)=\sum b_\bb \xx^\bb$ is a $Q$-$s$-Gevrey series, thanks to (4) above. We consider the change of variables \begin{equation}\label{Eq. Blow up}
x_1=z_1,\quad x_2=z_1z_2,\quad \dots,\quad x_d=z_1z_d.
\end{equation}  If $R(\zz)=Q(\xx)$ and $\hat{f}_1(\zz)=\hat{f}_0(\xx)$, we see that $\hat{f}_1$ is a $R$-$s$-Gevrey series. Now, \[R(\zz)=Q(z_1,z_1z_2,\dots,z_1z_d)=\sum_{j=k}^\infty z_1^j Q_j(1,z_2,\dots,z_d)=z_1^k U(\zz),\] where $U$ is a unit, because $U(\00)=Q_k(1,0,\dots,0)\neq 0$. Using this equation and (2) above, we find that $\hat{f}_1$ is $z_1^k$-$s$-Gevrey, or equivalently, a $z_1$-$s/k$-Gevrey series. Let us write $z'=(z_2,\dots,z_d)$. Since  \[
\hat{f}_1(\zz)=\sum_{\bb\in\N^d} b_\bb z_1^{|\bb|} z_2^{\beta_2}\cdots z_d^{\beta_d}\\
=\sum_{{(n,{\gamma})\in\N\times\N^{d-1}}\atop {n\geq |{\gamma}|}} b_{n-|{\gamma}|,{\gamma}} z_1^n \zz'^{{\gamma}},\] we can find constants $C,A>0$ such that $|b_{n-|{\gamma}|,{\gamma}}|\leq CA^{n+|{\gamma}|}n!^{s/k}$. Therefore, \[|b_\bb|\leq CA^{\beta_1+2\beta_2+\cdots+2\beta_d} |\bb|!^{s/k},\quad \text{ } \bb\in\N^d,\] i.e.,  $\hat{f}_0$ is $(s/k,\dots,s/k)$-Gevrey. The same is true for $\hat{f}$ due to Lemma \ref{Lemma linear change}.
\end{proof}

\section{A preliminary Maillet-type theorem for singular PDEs}\label{Sec. A preliminary Maillet-type theorem}

The aim of this section is to establish the existence, uniqueness, and Gevrey class (in the time variable $t$) of formal solutions of a family of singular PDEs. These include the equations that will be obtained by lifting (\ref{e1general}). The results presented here will be the key to prove Theorems \ref{teopral} and \ref{Thm. Convergent}.

More precisely, fixing $m,d,N\in\N^\ast$, $p,k\in\N$ and local coordinates $(t,x)\in(\C\times\C^d,0)$, we consider the system of equations
\begin{equation}\label{Eq. Main GT} 
	[c_p(\xx)(t\d_t)^p+\cdots+c_1(\xx)(t\d_t)+c_0(\xx)]u=B(\xx)t^k+G(x)(t,D^mu).
\end{equation} for an unknown $u=u(t,x)\in\C^N$. The coefficients in (\ref{Eq. Main GT}) are assumed to be holomorphic and bounded near the origin, say $c_0,\dots,c_p\in\mathcal{O}_b(D_r^d,\C^{N\times N})$ and $B\in \mathcal{O}_b(D_r^d,\C^N)$ for a fixed $r>0$. Moreover, $G(\xx)(t,D^mu)$ is the operator 
$$u(t,\xx)\mapsto G(\xx)(t,D^mu):=G_0(t,\xx,u)+\sum_{(b,\aa)\in I_m} G_{b,\aa}(t,\xx) t^b\d_t^b\d_{\aa,\xx} u,$$ acting on $\mathbb{C}[[t,\xx]]^N$, where:

\begin{itemize}
\item $I_m:=\{(b,\aa)\in \N\times \N^d  \,:\, b+|\aa|\leq m\}$ is a finite set of indices.

\item $G_0\in\mathcal{O}_b(D_r\times D_r^d\times D_r^N,\C^N)$ and $G_{b,\a}\in\mathcal{O}_b(D_r\times D_r^d)$, for all $(b,\aa)\in I_m$. 

\item The previous maps have the convergent Taylor expansions
\begin{align*}
G_0(t,x,u)=\sum_{j=0}^\infty F_{0,j}(x,u) t^j,\quad\text{ and }\quad  G_{b,\aa}(t,\xx)=\sum_{j=1}^\infty  g_{b,\aa,j}(\xx)t^j,
\end{align*} respectively. We assume that  $$F_{0,j}(x,u)=\sum_{\gamma\in\N^N,  |{\gamma}|\geq 2} F_{0,j,{\gamma}}(\xx) u^{{\gamma}},$$ has only non-linear terms in $u$, where $F_{0,j,\gamma}\in\mathcal{O}_b(D_r^d,\C^N)$ and $g_{b,\a,j}\in\mathcal{O}_b(D_r^d)$. Thus, the non-linear terms in $u$ of $G$ are collected in $G_0$ whereas the remaining terms are linear in $u$ and its derivatives.
\end{itemize}

Equation (\ref{Eq. Main GT}) is part of a family of scalar equations ($N=1$) treated in \cite[Chapter 6]{geta} for $k=1$. In that case, the Gevrey class is given by the maximum of \begin{equation}\label{Eq. Gevrey class}
	s_p(t^{j+b}\d_t^b\d_{\aa}):=\max\left\{0,\frac{b+|\aa|-p}{j}\right\},
\end{equation} and taken over the terms appearing on the right-hand side of (\ref{Eq. Main GT}). Our adaptation below will be obtained from this statement which is Theorem 6.3.1 and Corollary 6.3.3 (1) for $p=0$ in \cite{geta} (and $d=1$ in their notation). 

\begin{theo}[Gerard-Tahara]\label{Thm. Gerard-Tahara1} A sufficient condition to guarantee the existence and uniqueness of a solution of equation (\ref{Eq. Main GT}) of the form \begin{equation}\label{Eq. uk}
		\widehat{u}(t,x)=\sum_{n=k}^\infty u_n(x) t^n\in \mathcal{O}_b(D_\rho^d,\C^N)[[t]],\quad \text{ for some } \rho>0,
	\end{equation} is that  \begin{equation}\label{Eq. cpc0 invertible}
c_p(0) \text{ and } c_p(0)n^p+\cdots+c_1(0)n+c_0(0)\text{ are invertible for all } n\geq 0.
\end{equation} In this case, $\widehat{u}$ is $s$-Gevrey in $t$, where \begin{equation}\label{Eq. s}
		s:=\sup_{(j,b,\aa)\in J} s_p(t^{j+b}\d_t^b\d_{\aa}),\end{equation} and $J=\{ (j,b,\aa)\in\N^\ast \times\N\times\N^d  : g_{b,\aa,j}(\xx)\not\equiv 0\}$.
\end{theo}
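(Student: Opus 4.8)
The plan is to deduce Theorem~\ref{Thm. Gerard-Tahara1} from the Maillet-type theorem of G\'erard and Tahara \cite[Chapter~6]{geta} by reductions that preserve the structure of $(\ref{Eq. Main GT})$, so that none of their estimates has to be reproved. Two features distinguish $(\ref{Eq. Main GT})$ from the equations handled there: the source sits at the order $t^k$ instead of $t^1$, and $(\ref{Eq. Main GT})$ is a system ($N\ge1$) rather than a scalar equation. We may assume $k\ge1$, the only case used below. To reduce to $k=1$, I would substitute $\widehat{u}=t^{k-1}\widehat{v}$ and divide by $t^{k-1}$. Using $(t\partial_t)^j(t^{k-1}v)=t^{k-1}(t\partial_t+k-1)^jv$, the Leibniz rule $t^b\partial_t^b(t^{k-1}w)=t^{k-1}\sum_{c\le b}\kappa_{b,c}\,t^c\partial_t^cw$ with $\kappa_{b,b}=1$, and $u^{\gamma}=t^{(k-1)|\gamma|}v^{\gamma}$ on the nonlinear part, one checks that $\widehat{v}$ must solve an equation of the form $(\ref{Eq. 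Main GT})$ with $k=1$: the source becomes $B(x)t$; the principal part keeps its order $p$ and its leading coefficient $c_p(x)$, while its characteristic polynomial $P_n(x):=c_p(x)n^p+\cdots+c_0(x)$ is replaced by $P_{n+k-1}(x)$, so that $(\ref{Eq. cpc0 invertible})$ still holds for all $n\ge1$; the nonlinear part stays at least quadratic in $v$; and each new coefficient of a term $t^{c}\partial_t^c\partial_{\alpha}v$ is a combination of the old $g_{b,\alpha,j}(x)$ with $b\ge c$, so it still vanishes at $t=0$ and its index set yields a Gevrey exponent $\le s$. Since $u\leftrightarrow t^{k-1}v$ is a bijection between the solutions of the two equations of the shape $(\ref{Eq. uk})$, and a series that is $s'$-Gevrey in $t$ with $s'\le s$ is a fortiori $s$-Gevrey, it suffices to treat $k=1$.

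For $k=1$ and $N\ge1$, inserting $\widehat{u}=\sum_{n\ge1}u_n(x)t^n$ into $(\ref{Eq. Main GT})$ produces a recursion $P_n(x)u_n(x)=\Phi_n(x)$, where $\Phi_n$ is a universal polynomial expression in $u_1,\dots,u_{n-1}$ and finitely many of their $x$-derivatives — the $x$-derivatives come from the $\partial_{\alpha}$ terms, which only reach lower-order coefficients because the $G_{b,\alpha}$ vanish at $t=0$, and the nonlinear terms are quadratic in $u=O(t)$. The one analytic input needed is the lemma that $(\ref{Eq. cpc0 invertible})$ forces $M:=\sup\{\|P_n(x)^{-1}\|:n\ge0,\ x\in D_{\rho}^d\}<\infty$ for a suitable $\rho>0$, with moreover $\|P_n(x)^{-1}\|\le M'/n^p$ for large $n$: writing $P_n(x)=n^p\bigl(c_p(x)+R_n(x)\bigr)$ with $\|R_n(x)\|\to0$ uniformly on a fixed polydisc, invertibility of $c_p(0)$ — hence of $c_p(x)$ nearby — absorbs $R_n$ for large $n$, while the finitely many small $n$ are treated one at a time by continuity and invertibility of $P_n(0)$. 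With this, the recursion determines all $u_n$ uniquely as holomorphic functions on one common polydisc, and the Nagumo-norm majorant argument of \cite{geta} goes through verbatim after replacing absolute values by a norm on $\C^N$; it yields $\|u_n\|_{D_{\rho'}^d}\le CA^nn!^s$, where $s$ comes from balancing, for each occurring term $g_{b,\alpha,j}(x)t^{j+b}\partial_t^b\partial_{\alpha}u$, the factor $n(n-1)\cdots(n-b+1)$ from $t^b\partial_t^b$, the loss from $\partial_{\alpha}$ measured in Nagumo norms, the gain $1/n^p$ from $\|P_n(x)^{-1}\|$, and the shift of the index by $j$ due to the prefactor $t^j$; the worst such balance over $(j,b,\alpha)\in J$ is precisely $s=\sup_J\max\{0,(b+|\alpha|-p)/j\}$, i.e. $(\ref{Eq. s})$. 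This delivers existence, uniqueness and the $s$-Gevrey character of $\widehat{u}$ in $t$.

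The step I expect to carry the real weight — the Nagumo-norm majorant estimate that extracts exactly the exponent $s$ — is already carried out in \cite{geta} and is used here as a black box. The work that remains, and where care is needed, is the bookkeeping: checking that the substitution above preserves every structural hypothesis (Fuchs-type principal part of order $p$, vanishing of the $G_{b,\alpha}$ at $t=0$, at-least-quadratic nonlinearity, and the index set $J$ governing $s$); observing that the passage from $N=1$ to $N\ge1$ changes nothing but notation in those estimates; and proving the uniform-inverse lemma. That lemma is the precise point at which $(\ref{Eq. cpc0 invertible})$ is used in full: it is the invertibility of $c_p(0)$, not merely that of each individual $P_n(0)$, that makes all the coefficients $u_n$ holomorphic on a single polydisc and keeps $\|P_n(x)^{-1}\|$ bounded there uniformly in $n$.
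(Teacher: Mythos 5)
Your proposal follows essentially the same route as the paper: the substitution $u=t^{k-1}v$ to reduce to $k=1$ (checking that the leading coefficient, the Poincar\'e-type invertibility condition, the vanishing of the linear coefficients at $t=0$, the quadratic nonlinearity, and the Gevrey index are preserved), the uniform-inverse lemma giving $\|P_n(x)^{-1}\|\le M/n^p$ on a common polydisc via invertibility of $c_p(0)$ plus a Neumann-series/continuity argument, and the invocation of the G\'erard--Tahara majorant estimate, adapted verbatim to systems, as the black box producing the exponent $s$. The only cosmetic differences (bounding the reduced equation's Gevrey index by $s$ rather than showing it equals $s$, and the index range $n\ge1$ versus $n\ge0$ in the shifted invertibility condition) do not affect the validity of the argument.
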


\begin{proof} If we substitute $\widehat{u}(t,x)=\sum_{n=0}^\infty u_n(x) t^n$ into (\ref{Eq. Main GT}) and equate coefficients in corresponding powers of $t$, we find that \begin{equation}\label{Eq. aux implicit GT}
c_0(x)u_0(x)=F_{0,0}(x,u_0(x)).
\end{equation} Moreover, for $n\geq 1$ we have the recurrence \begin{align}
\nonumber  [c_p(x)n^p+&\cdots+c_1(x)n+c_0(x)]u_{n}(x)\\
\label{Eq. Aux GT} &=\delta_{n,k}B(x)+\sum_{l=1}^{n-1} \sum_{(b,\a)\in I_m} \binom{l}{b} b! g_{b,\aa,n-l}(x) \d_\aa(u_l) +\text{ l.o.t},
\end{align} where l.o.t. are the non-linear terms in $u_1,\dots,u_{n-1}$ coming from $G_0(t,x,\widehat{u}(t,x))$, and $\delta_{n,k}$ is the Kronecker delta. 

Condition (\ref{Eq. cpc0 invertible}) allows to determine uniquely the coefficients $u_{n}(x)$, $n\geq 1$, from (\ref{Eq. Aux GT}) thanks to the following lemma. To not interrupt the discussion, we postpone the proof to the end of the section.

\begin{lemma}\label{Lema matrices} Consider $c_0,\dots,c_p\in\mathcal{O}_b(D_r^d,\C^{N\times N})$ such that (\ref{Eq. cpc0 invertible}) holds. Then there is $0<\rho\leq r$ such that  $c_p(x)n^p+\cdots+c_1(x)n+c_0(x)$ is invertible, for all $x\in D_\rho^d$ and $n\geq 0$. Moreover, there is a constant $M>0$ such that \begin{equation}\label{Eq. Bound inverses}
		\sup_{x\in D_\rho^d} \left\|(c_p(x)n^p+\cdots+c_1(x)n+c_0(x))^{-1}\right\|\leq \frac{M}{n^p},\qquad \text{ for all } n\geq 1.
	\end{equation} Here $\|B\|=\max_{1\leq i\leq N} \sum_{j=1}^N |B_{i,j}|$, for $B=(B_{ij})\in\C^{N\times N}$.
\end{lemma}

We have seen that $u_n\in \mathcal{O}_b(D_\rho^d,\C^N)$ can be found recursively from $u_0$. Now, to determine $u_0(x)$ we apply the implicit function theorem which shows that (\ref{Eq. aux implicit GT}) has a unique analytic solution $u_0(x)\in\C\{x\}^N$ such that $u_0(0)=0$. Since $u_0=0$ also solves this equation, the initial term of $\widehat{u}$ is $u_0(x)\equiv 0$. Moreover, (\ref{Eq. Aux GT}) shows that $u_0=u_1=\cdots=u_{k-1}=0$ while $u_k(x)=(c_p(x)k^p+\cdots+c_1(x)k+c_0(x))^{-1} B(x)$. In this way, we see that the system (\ref{Eq. Main GT}) has a unique formal power series solution of the form (\ref{Eq. uk}).

We proceed with the Gevrey type. The result holds for $k=1$ since the majorant argument in \cite{geta} can be modified for vector equations in a straightforward way, see also Remark \ref{Rrk. Poincare condition} below. It is worth remarking that the reason the term $-p$ appears in (\ref{Eq. Gevrey class}) is due to the inequality (\ref{Eq. Bound inverses}) ---in \cite[p.180]{geta} it is used in the equivalent form $\|L_nu_n\|_r\geq (\sigma_0/2)^p n^p \|u_n\|_r$---.

The case $k>1$ is done using the change of variables $$u(t,x)=t^{k-1}v(t,x).$$ We can check that $\widehat{u}$ in (\ref{Eq. uk}) solves (\ref{Eq. Main GT}) if and only if $\widehat{v}=t^{-(k-1)}\widehat{u}$ solves an equation of the same type but with $k=1$. In fact, a direct calculation using Leibniz rule to compute $(t\d_t)^b(t^{k-1}v)$ and $t^b\d_t^b(t^{k-1}v)$ shows that $u$ satisfies (\ref{Eq. Main GT}) if and only if $v$ satisfies $$[\widetilde{c}_p(\xx)(t\d_t)^p+\cdots+\widetilde{c}_1(\xx)(t\d_t)+\widetilde{c}_0(\xx)]v=B(\xx)t+\widetilde{G}(x)(t,D^mv).$$ The new coefficients are $\widetilde{c}_l=\sum_{j=l}^p \binom{j}{l}(k-1)^{j-l}c_j$, $l=0,1,\dots,p$,
$$\widetilde{G}(x)(t,D^mv)=\widetilde{G}_0(t,x,v)+\!\sum_{(b,\aa)\in I_m} \sum_{l=0}^{b} \binom{b}{l} 
\binom{k-1}{b-l}(b-l)! G_{b,\aa}(t,\xx) t^{l} \d_t^{l}\d_\aa(v),$$ where $G_0(t,x,t^{k-1}v)=t^{k-1}\widetilde{G}_0(t,x,v)$ and $$\widetilde{G}_0(t,x,v)=\sum_{j=0}^\infty \widetilde{F}_{0,j}(t,x,v) t^j,\quad \widetilde{F}_{0,j,\gamma}(t,x,v):=\sum_{{\gamma\in\N^N}\atop {|{\gamma}|\geq 2}} F_{0,j,{\gamma}}(\xx) t^{(k-1)(|\gamma|-1)} v^{{\gamma}}.$$ They remain holomorphic and bounded near the origin. Moreover, the condition (\ref{Eq. cpc0 invertible}) holds in this case since $\widetilde{c}_p(x)=c_p(x)$ and 
$$\sum_{l=0}^p n^l\widetilde{c}_l(x) =\sum_{j=0}^p (k-1+n)^{j}c_j(x).$$ Thus these matrices are invertible at $x=0$, for all $n\geq 0$. By the case $k=1$, $\widehat{v}$ is of $s$-Gevrey, where $s$ is the maximum of $s_p(t^{j+l}\d_t^l \d_\a)$ over the indexed $(j,l,\a)$ such that $0\leq l\leq b$, $b+|\a|\leq m$, and $g_{b,\a,j}(x)\neq 0$. But (\ref{Eq. Gevrey class}) shows that $$\max_{0\leq l\leq b} s_p(t^{j+l}\d_t^l \d_\a)=s_p(t^{j+b}\d_t^b \d_\a).$$ Therefore, $s$ is given by (\ref{Eq. s}). Since multiplication by $t^{k-1}$ does not change the Gevrey order of a series, $\widehat{u}$ is also $s$-Gevrey as we wanted to prove.
\end{proof}

\begin{nota}\label{Rrk. Poincare condition}
The invertibility of  
$c_p(0)n^p+\cdots+c_1(0)n+c_0(0)$ means that $$C(\lambda):=\det(c_p(0)\lambda^p+\cdots+c_1(0)\lambda+c_0(0))\neq 0,\qquad\text{ for } \lambda=n\in\N.$$ Since $c_p(0)$ is also invertible, the function $C(\lambda)$ is a polynomial in $\lambda$ of degree exactly $Np$. If we denote its roots by $\lambda_1,\dots,\lambda_{Np}\in\C$, we are requiring that $\lambda_j\neq n$, for all possible $j$ and  $n$. This is equivalent to the existence of a constant $\sigma>0$ such that $$\left|n-\lambda_j\right|> \sigma n,\qquad \text{ for all } j=1,\dots, Np,\, n\in\N,$$ which is the classical \textit{Poincar\'{e} condition}, c.f., \cite[Theorem 6.3.1]{geta}.
\end{nota}

\begin{nota}\label{Rrm Stirling} An equivalent form of equation (\ref{Eq. Main GT}) is \begin{equation}\label{Eq. Main GT*}
[c'_p(\xx)t^p\d_t^p+\cdots+c'_1(\xx)t\d_t+c'_0(\xx)]u=B(\xx)t^k+G(x)(t,D^mu).
\end{equation} In this case, the hypothesis on the matrices is that $$c_p'(0)\text{ and } \sum_{j=0}^p n(n-1)\cdots (n-1+j)c_j'(0)\text{ are invertible for all } n\geq 0.$$ This can be checked recalling the \textit{Stirling numbers of the first kind} $s(j,l)\in\mathbb{Z}$, $1\leq l\leq j$, which are defined by the expansion $$\lambda(\lambda-1)\cdots (\lambda-1+j)=\sum_{l=1}^j s(j,l) \lambda^l,\,\, \text{ and satisfying }\,\, t^j\d_t^j=\sum_{l=1}^j s(j,l) (t\d_t)^l.$$ Writing the left-hand side of (\ref{Eq. Main GT*}) in terms of the operators $(t\d_t)^j$, it takes the form of (\ref{Eq. Main GT}) with 
$$c_p(x)=c_p'(x),\quad c_l(x)=\sum_{j=l}^p s(j,l) c_j'(x),\, l=0,1\dots,p-1.$$ Thus $\sum_{l=0}^p c_l(x)n^l=\sum_{j=0}^p n(n-1)\cdots (n-1+j)c_j'(x)$ as required.
\end{nota}

We conclude the section with the proof of the lemma.

\begin{proof}[Proof of Lemma \ref{Lema matrices}] Since $c_0(0), c_p(0)$ are invertible we can choose $\rho>0$ such that $c_0(x), c_p(x)$ are invertible and $c_0(x)^{-1}, c_p(x)^{-1}\in \mathcal{O}_b(D_\rho^d,\C^{N\times N})$. 
	
We recall that if $B=(B_{ij})\in\C^{N\times N}$ is such that $\|B\|<1$ for a matrix norm $\|\cdot \|$, then $I_N-B$ is invertible with inverse given by the Neumann series $(I_N-B)^{-1}=\sum_{n=0}^\infty B^n$. Moreover $\|(I_N-B)^{-1}\|\leq {1}/{(1-\|B\|)}.$ In particular, this holds for  $B\in\mathcal{O}_b(D_\rho^d,\C^{N\times N})$ and the supremum norm  $\|B\|_{\rho}:=\sup_{x\in D_\rho^d} \|B(x)\|$, where $\|\cdot\|$ is as in the statement of the lemma.
	
Consider an integer $n>L=\|c_p^{-1}\|_{\rho} \sum_{j=1}^{p-1}  \|c_j\|_{\rho}$. If $x\in D_\rho^d$, then  
	$$\left\|\left(\frac{c_0(x)}{n^p}+\frac{c_1(x)}{n^{p-1}}+\cdots+\frac{c_{p-1}(x)}{n}\right) c_p^{-1}(x) \right\|\leq \sum_{j=0}^{p-1} \frac{\|c_j\|_{\rho}}{n} \|c_p^{-1}\|_{\rho}<1.$$ By the previous paragraph, we find that $$c_p(x)n^p+\cdots+c_1(x)n+c_0(x)=\left(I_N+\left(\frac{c_0(x)}{n^p}+\cdots+\frac{c_{p-1}(x)}{n}\right) c_p^{-1}(x)\right) n^p c_p(x),$$ is invertible, for all $x\in D^d_\rho$. Moreover, we have the bound \begin{align*}
		&\| (c_p(x)n^p+\cdots+c_1(x)n+c_0(x))^{-1}\| \leq  \frac{\|c_p^{-1}\|_\rho/n^p}{1-\left\|\left(\frac{c_0(x)}{n^p}+\cdots+\frac{c_{p-1}(x)}{n}\right) c_p^{-1}(x)\right\|}\\
		& \leq \frac{\|c_p^{-1}\|_\rho/n^p}{1-\|c_p^{-1}\|_\rho\left(\frac{\|c_0\|_{\rho}}{n^p}+\cdots+\frac{\|c_{p-1}\|_{\rho}}{n}\right)}\leq \frac{1}{an^p-\left(\|c_0\|_{\rho}+\cdots+\|c_{p-1}\|_{\rho}n^{p-1}\right)},
	\end{align*} where $a=1/\|c_p^{-1}\|_\rho$. This shows that (\ref{Eq. Bound inverses}) holds for a large $M$. Note also that the denominator is indeed positive since, by hypothesis, $\|c_p^{-1}\|_{\rho} \sum_{j=1}^{p-1}  \|c_j\|_{\rho} n^j\leq n^{p-1} \sum_{j=1}^{p-1}  \|c_j\|_{\rho}/a<n^p$.  For the remaining integers $1\leq n\leq L$, by (\ref{Eq. cpc0 invertible}) we can shrink $\rho$ and enlarge $M$ if necessary to assure that  $c_p(x)n^p+\cdots+c_1(x)n+c_0(x)$ is invertible, for all $x\in D_\rho^d$ and that (\ref{Eq. Bound inverses}) still holds as it was required.
\end{proof}

\section{Some technical results}\label{Sec. Some technical results}

The proof of Theorems \ref{teopral} and \ref{Thm. Convergent} requires some technical lemmas that we collect here. They contain elementary properties on the derivatives of powers of a function and on suitable changes of variables. 

Although we are mainly interested in holomorphic coefficients, we state the following two results for arbitrary formal power series. We recall that according to the notation in (\ref{e157}) we have that \begin{equation}\label{e337}
	\d_{\aa}^\star(P):=(\partial_{x_1}P)^{\alpha_1}\cdots (\partial_{x_d}P)^{\alpha_d},\qquad \alpha=(\alpha_1,\ldots,\alpha_d)\in\N^d\setminus\{0\}.
\end{equation}

\begin{lemma}\label{lema178}
Consider $P\in \C[[\xx]]$, $\aa\in \N^{d}\setminus\{\00\}$ and an integer $n\geq 1$. Then,  
\begin{equation}\label{Eq. DaP^n}
\partial_{\aa}(P^n)=\sum_{j=1}^{n}\frac{n!}{(n-j)!}P^{n-j}\cdot\AA_{\aa,j},
\end{equation} where  each $\AA_{\aa,j}$ is a polynomial in derivatives of $P$, and it does not depend on $n$. In particular, $\AA_{\aa,1}=\d_\alpha(P)$, $A_{\aa,j}=0$ if $j>|\aa|$ and \begin{equation}\label{Eq. Aa|a|}
\AA_{\aa,|\alpha|}=\d_{\aa}^\star(P).    
\end{equation}
\end{lemma}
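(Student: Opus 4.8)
The claim is a structural statement about the higher-order chain/Leibniz rule applied to a power $P^n$, so the natural approach is induction on $|\alpha|$, building up one derivative at a time. The base case $|\alpha|=1$, say $\alpha=\ee_l$, is immediate: $\partial_{x_l}(P^n)=nP^{n-1}\partial_{x_l}P$, which matches (\ref{Eq. DaP^n}) with $A_{\ee_l,1}=\partial_{x_l}P=\partial^\star_{\ee_l}(P)$ and $A_{\ee_l,j}=0$ for $j\geq 2$. Notice this already verifies both special cases ($A_{\alpha,1}=\partial_\alpha P$ and $A_{\alpha,|\alpha|}=\partial^\star_\alpha(P)$) at the bottom of the induction.

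\textbf{Inductive step.} Assuming (\ref{Eq. DaP^n}) holds for a fixed multi-index $\alpha$ with all the $A_{\alpha,j}$ independent of $n$, I would apply $\partial_{x_l}$ to both sides to obtain the formula for $\alpha+\ee_l$. Differentiating $\frac{n!}{(n-j)!}P^{n-j}A_{\alpha,j}$ gives two contributions: $\frac{n!}{(n-j)!}(n-j)P^{n-j-1}(\partial_{x_l}P)A_{\alpha,j} = \frac{n!}{(n-j-1)!}P^{n-(j+1)}(\partial_{x_l}P)A_{\alpha,j}$ and $\frac{n!}{(n-j)!}P^{n-j}\partial_{x_l}A_{\alpha,j}$. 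Re-indexing the first family by $j'=j+1$ and collecting powers of $P$, the coefficient of $\frac{n!}{(n-j)!}P^{n-j}$ in $\partial_{\alpha+\ee_l}(P^n)$ becomes
\[
A_{\alpha+\ee_l,j} = \partial_{x_l}A_{\alpha,j} + (\partial_{x_l}P)\, A_{\alpha,j-1},
\]
with the convention $A_{\alpha,0}=0$. This expression is manifestly a polynomial in the derivatives of $P$ and is independent of $n$, so the structural assertion propagates. Since $A_{\alpha,j}=0$ for $j>|\alpha|$ by hypothesis, the recursion gives $A_{\alpha+\ee_l,j}=0$ for $j>|\alpha|+1=|\alpha+\ee_l|$, and the $j=1$ term reads $A_{\alpha+\ee_l,1}=\partial_{x_l}A_{\alpha,1}=\partial_{x_l}\partial_\alpha P=\partial_{\alpha+\ee_l}P$, as required.

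\textbf{The top coefficient.} For the identity (\ref{Eq. Aa|a|}), I would read off the recursion at the extreme index $j=|\alpha|+1$: there the term $\partial_{x_l}A_{\alpha,|\alpha|+1}$ vanishes (it is zero since $|\alpha|+1>|\alpha|$), so $A_{\alpha+\ee_l,|\alpha+\ee_l|} = (\partial_{x_l}P)\,A_{\alpha,|\alpha|} = (\partial_{x_l}P)\,\partial^\star_\alpha(P)$ using the inductive hypothesis (\ref{Eq. Aa|a|}) for $\alpha$; by definition (\ref{e337}) this is exactly $\partial^\star_{\alpha+\ee_l}(P)$. No convergence or analyticity issues arise since everything is a finite algebraic manipulation in $\C[[\xx]]$; the only mild bookkeeping subtlety — and the one place a reader might stumble — is the re-indexing of the two sums and keeping the $P^{n-j}$ powers aligned, together with checking that the combinatorial factors $\frac{n!}{(n-j)!}$ recombine correctly after the shift $j\mapsto j+1$. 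I would present that reconciliation carefully but it is entirely routine.
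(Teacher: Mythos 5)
Your proposal is correct and follows essentially the same route as the paper's proof: induction on $|\alpha|$, with the inductive step producing exactly the recursion $A_{\alpha+\ee_l,j}=\partial_{\ee_l}(A_{\alpha,j})+\partial_{\ee_l}(P)\,A_{\alpha,j-1}$ from which the vanishing for $j>|\alpha|$, the identity $A_{\alpha,1}=\partial_\alpha(P)$, and the top coefficient $A_{\alpha,|\alpha|}=\partial_\alpha^\star(P)$ are all read off. The re-indexing and the recombination of the factors $n!/(n-j)!$ that you flag as the only bookkeeping point is handled the same way in the paper, so no gap remains.
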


\begin{proof}
We apply induction  on $|\aa|$. The result is valid for $|\aa|=1$ and 
\begin{equation}\label{e186}
\AA_{\ee_l,1}:=\partial_{\ee_l}(P),\qquad l=1,\ldots,d,
 \end{equation} since $\partial_{\ee_l}(P^n)=nP^{n-1}\partial_{\ee_l}(P)$. If we assume the result is valid up to some $|\aa|$, the induction argument shows that 
\begin{align*}
\partial_{\aa+\ee_l}(P^n)&=\sum_{j=1}^{n} \frac{n!}{(n-j)!} \partial_{\ee_l}(P^{n-j} \AA_{\aa,j})\\
&=\sum_{j=1}^{n-1} \frac{n!}{(n-j-1)!}P^{n-j-1} \partial_{\ee_l}(P) \AA_{\aa,j}+\sum_{j=1}^n \frac{n!}{(n-j)!}P^{n-j} \partial_{\ee_l}( \AA_{\aa,j}),
\end{align*} for $l=1,\dots,d$. A rearrangement of the terms in the previous expression leads to (\ref{Eq. DaP^n}) for $\aa+e_l$ where
\begin{align}\label{e196}
\AA_{\aa+\ee_l,1}&=\partial_{\ee_l}(\AA_{\aa,1}),\\
\label{Eq. 21}
\AA_{\aa+\ee_l,j}&=\partial_{\ee_l}(\AA_{\aa,j})+\partial_{\ee_l}(P)\cdot \AA_{\aa,j-1},\qquad j=2,,\dots, n.
\end{align} Then (\ref{Eq. DaP^n}) holds for $|\aa|+1$. The formula follows from the principle of induction. 

On the other hand, it is clear from (\ref{e186}) and (\ref{e196}) that $\AA_{\aa,1}=\d_{\aa}(P)$ is valid. In addition to this, if $j>|\aa|$,  the recurrence (\ref{Eq. 21}) implies that $A_{\aa,j}=0$. Finally, if $j=|\aa|$, (\ref{Eq. 21}) takes the form $$\AA_{\ee_l,1}=\d_{\ee_l}(P),\qquad  \AA_{\aa+\ee_l,|\aa|+1}=\partial_{\ee_l}(P)\cdot \AA_{\aa,|\aa|},$$ from which (\ref{Eq. Aa|a|}) follows.
\end{proof}

\begin{nota} Equation (\ref{Eq. 21}) describes a recursion leading to each $\AA_{\aa,j}$. We can give closed formulas for them using the multivariate Fa\`{a} di Bruno formula \cite[p. 505]{cosa}. Indeed, consider  $h(\xx)=f(g^{(1)}(\xx),\ldots, g^{(n)}(\xx))$, where  
$$f(y_1,\ldots,y_n)=y_1\cdots y_n,\qquad g^{(1)}(\xx)\equiv\ldots\equiv g^{(n)}(\xx)\equiv P(\xx).$$

Notice that $\d_{{\lambda}}(f)(P,\dots,P)=P^{n-j}$ if ${\lambda}\in\{0,1\}^n$ and $|{\lambda}|=j$, and $\d_{{\lambda}}(f)=0$ otherwise. Then, for every $\aa\in\N^d\setminus\{0\}$ one has
\begin{equation}\label{e169}
\partial_{\aa}(P^n)=\sum_{j=1}^{|\aa|}  P^{n-j}\sum_{{{\lambda}\in\{0,1\}^{n}}\atop {|{\lambda}|=j}} \left[\sum_{s=1}^{|\aa|}\sum_{p_s(\aa,{\lambda})} \aa! \prod_{r=1}^{s} \frac{(\partial_{{\ell}_r}P)^{|{k}_{r}|}}{{k}_r!({\ell}_r!)^{|{k}_r|}}\right]
,
\end{equation} where 
\begin{multline*}
p_s(\aa,{\lambda})=\{
({k}_1,\ldots,{k}_s;{\ell}_1,\ldots,{\ell}_s)\in(\N^{n})^{s}\times(\N^{d})^{s}:\, |{k}_i|>0,\\
 \00<{\ell}_1<\cdots<{\ell}_{s},\quad \sum_{i=1}^{s} {k}_i={\lambda},\quad\sum_{i=1}^{s}|{k}_i|{\ell}_i=\aa\}.
\end{multline*}

Note there are $\binom{n}{j}$ $n$-tuples 
${\lambda}\in\{0,1\}^{n}$ such that $|{\lambda}|=j$ and each of them is obtained from $\ee_1+\ee_2+\cdots+\ee_j$ by permuting the corresponding variables. Fixing one such ${\lambda}$, if $({k}_1,\ldots,{k}_s;{\ell}_1,\ldots,{\ell}_s)\in p_s(\aa,{\lambda})$, we see that $s\leq \sum_{i=1}^s |{k}_i|=|{\lambda}|=j$. Moreover, the term in brackets in (\ref{e169}) is independent of ${\lambda}$. Indeed, the previous permutation gives a bijective correspondence between $p_{s,j}(\aa):=p_s(\aa,\ee_1+\ee_2+\cdots+\ee_j)$ and $p_s(\aa,{\lambda})$. Therefore, (\ref{e169}) simplifies to \begin{equation}\label{e320}
\partial_{\aa}(P^n)= \sum_{j=1}^{|\aa|} \binom{n}{j} P^{n-j} \left[\sum_{s=1}^{j}\sum_{p_{s,j}(\aa)} \aa! \prod_{r=1}^{s} \frac{(\partial_{{\ell}_r}P)^{|{k}_{r}|}}{{k}_r!({\ell}_r!)^{|{k}_r|}}\right],
\end{equation} giving explicit formulas for $A_{\aa,j}$.
\end{nota}

\begin{lemma}\label{Lema412} Fix $m\geq1$ and $h,  P\in\C[[\xx]]$. Consider the differential operators $L_j$ in (\ref{L1general}) and the associated functions $L^{\star}_j(P)$ in (\ref{e157}). If $P$ divides $L_j^\star(P)$, for all $j=1,\dots,m$, then $P^m$ divides $\sum_{j=1}^{m} P^{j-1} L_j(hP^m)$.
\end{lemma}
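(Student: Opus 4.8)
The plan is to expand both factors with the Leibniz rule and then bound, term by term, the power of $P$ that survives, using Lemma \ref{lema178} to control the derivatives of $P^m$. Writing $L_j(hP^m)=\sum_{|\aa|=j}a_{\aa}^{(j)}\,\d_{\aa}(hP^m)$ and applying the Leibniz formula gives $\d_{\aa}(hP^m)=\sum_{\bb\le\aa}\binom{\aa}{\bb}\d_{\aa-\bb}(h)\,\d_{\bb}(P^m)$. The summand with $\bb=\00$ contributes $\d_{\aa}(h)P^m$, which after multiplication by $P^{j-1}$ (with $j\ge1$) is already a multiple of $P^m$. For $\bb\neq\00$, note that $|\bb|\le|\aa|=j\le m$, so Lemma \ref{lema178} applies and gives $\d_{\bb}(P^m)=\sum_{i=1}^{|\bb|}\frac{m!}{(m-i)!}P^{m-i}\AA_{\bb,i}$, where the $\AA_{\bb,i}$ do not depend on $m$ and $\AA_{\bb,|\bb|}=\d_{\bb}^\star(P)$ by (\ref{Eq. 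Aa|a|}).

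Next I would read off the exponent of $P$ in a generic surviving monomial of $P^{j-1}L_j(hP^m)$: it is $(j-1)+(m-i)$ with $1\le i\le|\bb|\le j$. If $i\le j-1$, this exponent is at least $m$, so such a monomial lies in the ideal $(P^m)$. The only remaining possibility is $i=j$; but $i\le|\bb|\le j$ then forces $|\bb|=j$, and since $\bb\le\aa$ with $|\aa|=j$ this forces $\bb=\aa$. For those terms $\binom{\aa}{\bb}=1$, $\d_{\aa-\bb}(h)=h$, $\AA_{\bb,i}=\AA_{\aa,|\aa|}=\d_{\aa}^\star(P)$, and the numerical coefficient is $\frac{m!}{(m-j)!}$, so collecting them over all $\aa$ with $|\aa|=j$ produces exactly
\begin{equation}
\frac{m!}{(m-j)!}\,h\,P^{m-1}\sum_{|\aa|=j}a_{\aa}^{(j)}\,\d_{\aa}^\star(P)=\frac{m!}{(m-j)!}\,h\,P^{m-1}\,L_j^\star(P).
\end{equation}
By hypothesis $P$ divides $L_j^\star(P)$, hence this quantity is divisible by $P^m$. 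Since this holds for every $j=1,\dots,m$, every monomial of $\sum_{j=1}^m P^{j-1}L_j(hP^m)$ lies in $(P^m)$, which is the claim.

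The calculation itself is pure bookkeeping; the one place where something must be checked — and the only place the hypothesis is actually used — is the borderline index $i=|\bb|=|\aa|=j$, since that is precisely where the ``extra'' derivative factor $\d_{\aa}^\star(P)$ reassembles into $L_j^\star(P)$. So I expect the crux to be simply recognizing that all other monomials are divisible by $P^m$ for trivial degree reasons, isolating that single critical family of terms, and matching it against $L_j^\star(P)$; a minor technical point is ensuring $|\bb|\le m$ throughout, which holds because the outer sum runs only up to $j=m$.
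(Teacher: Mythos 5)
Your proof is correct and follows essentially the same route as the paper: Leibniz expansion of $\d_{\aa}(hP^m)$, Lemma \ref{lema178} for $\d_{\bb}(P^m)$, and a case analysis that isolates the single borderline family $\bb=\aa$, $i=j$, which reassembles into $\tfrac{m!}{(m-j)!}\,h\,P^{m-1}L_j^\star(P)$ and is absorbed by the divisibility hypothesis. The only cosmetic difference is that you split cases on the index $i$ whereas the paper first splits on $|\bb|$ and then on $l$; the content is identical.
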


\begin{proof} Using the multivariate Leibniz rule, \begin{equation}\label{Eq. MLR}
\d_{\aa}(hP^m)=\sum_{0\leq\bb\leq \aa} \binom{\aa}{\bb} \d_{\aa-\bb}(h)  \d_\bb(P^m),
\end{equation} we see that $$P^{j-1}L_j(hP^m)= P^{j-1}\sum_{|\aa|=j} a_{\aa}^{(j)} \sum_{0\leq\bb\leq \aa} \binom{\aa}{\bb} \d_{\aa-\bb}(h)  \d_\bb(P^m).$$ By Lemma \ref{lema178} we can write $$\partial_{\bb}(P^m)=\sum_{l=1}^{|\bb|}\frac{m!}{(m-l)!}P^{m-l}\cdot\AA_{\bb,l},$$ since $|\bb|\leq |\aa|=j\leq m$ and $\AA_{\bb,l}=0$ for $l>|\bb|$. To prove the statement, we analyze each one of the terms 
\begin{equation}\label{e283}
a_{\aa}^{(j)} \binom{\aa}{\bb} \d_{\aa-\bb}(h) \frac{m!}{(m-l)!} A_{\bb,l} P^{j-1+m-l},
\end{equation}
whose sum gives $P^{j-1} L_j(hP^m)$. We distinguish two  cases:
\begin{itemize}
\item If $|\bb|\leq j-1$, then $P^m$ divides (\ref{e283}) since $j-1+m-l\geq j-1+m-|\bb|\geq m$. \item If $|\bb|=j$, it holds that $\bb=\aa$. On the one hand, if $l<j$, then $j-1+m-l>m-1$ and we are done. Otherwise, $l=j$ and we are left with the term $$P^{j-1} \sum_{|\aa|=j} a_{\aa}^{(j)} h  A_{\aa,|\aa|} P^{m-j}=h L_j^\star(P)P^{m-1},$$ which, by hypothesis, is also divisible by $P^m$.
\end{itemize}
\end{proof}

The final lemma computes the derivatives of functions after a change of variables having $P$ as a holomorphic local coordinate.

\begin{lemma}\label{Lema 4} Let $P\in\C\{\xx\}$ such that $P(0)=0$ and $\d_{x_1}P(0)\neq 0$, and consider the change of variables $\xi:(\C^d,0)\to  (\C^d,0)$ given by \begin{equation}\label{Eq. xi}
\xi_1=P(\xx),\quad \xi_j=x_j,\quad j=2,\dots,d.
\end{equation}  If $f(\xi(\xx))=f(P(x),x_2,\dots,x_d)$ is holomorphic and $\aa\in\N^d\setminus\{0\}$, then $$\d_{\aa,\xx}(f)=\d_{\aa}^\star(P)\d_{\xi_1}^{|\aa|}(f)+\sum_{j=1}^{|\aa|-1} \left[ \sum_{\ast} B_{j,\beta}^{\aa}\cdot  \d_{\xi_1}^j\d_{\bb,\xi}(f)\right]+ \overline{\delta}_{\aa}\d_{\aa,\xi}(f),$$ where the inner sum is taken over all $\beta\in\N^{d-1}$ such that $(0,\beta)\leq \aa$ and $|\beta|\leq |\aa|-j$. The $B_{j,\beta}^{\aa}$ are polynomials in the derivatives of $P$ and  $\overline{\delta}_{\aa}:=(\overline{\delta}_{1,1})^{\a_1}\cdots (\overline{\delta}_{1,d})^{\a_d}$, where $\overline{\delta}_{i,j}:=1-\delta_{i,j}$ and $0^0=1$.
\end{lemma}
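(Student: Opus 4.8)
The plan is to prove the formula by induction on $|\aa|$, using the chain rule repeatedly. First I would establish the base case $|\aa|=1$: if $\aa=\ee_1$, then $\d_{x_1}(f\circ\xi)=\d_{x_1}(P)\cdot\d_{\xi_1}(f)=\d_{\ee_1}^\star(P)\d_{\xi_1}(f)$, which matches the claimed formula since $|\aa|=1$ leaves only the leading term (the middle sum is empty and $\overline{\delta}_{\ee_1}=\overline{\delta}_{1,1}=0$). If $\aa=\ee_l$ with $l\geq 2$, then $\d_{x_l}(f\circ\xi)=\d_{x_l}(P)\cdot\d_{\xi_1}(f)+\d_{\xi_l}(f)=\d_{\ee_l}^\star(P)\d_{\xi_1}(f)+\overline{\delta}_{\ee_l}\d_{\ee_l,\xi}(f)$, again as claimed since $\overline{\delta}_{\ee_l}=\overline{\delta}_{1,l}\cdot 1\cdots=1$ for $l\geq 2$.

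For the inductive step, I would assume the formula holds for some $\aa$ and apply $\d_{x_l}$ to both sides, for each $l=1,\dots,d$, to obtain the formula for $\aa+\ee_l$. The key mechanism is the chain rule in the form $\d_{x_l}=\d_{x_l}(P)\,\d_{\xi_1}+\overline{\delta}_{1,l}\,\d_{\xi_l}$ acting on any function of $\xi(\xx)$. Applying this to each of the three groups of terms in the inductive hypothesis, one sees that: differentiating the leading term $\d_{\aa}^\star(P)\d_{\xi_1}^{|\aa|}(f)$ produces $\d_{x_l}(P)\d_{\aa}^\star(P)\d_{\xi_1}^{|\aa|+1}(f)$ (the new leading term, since $\d_{\ee_l}^\star(P)\d_{\aa}^\star(P)=\d_{\aa+\ee_l}^\star(P)$) plus lower-order contributions absorbed into the middle sum; differentiating the middle sum and the last term likewise produces terms of the required shape. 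One then collects coefficients and checks the index ranges ($\beta\in\N^{d-1}$ with $(0,\beta)\leq\aa+\ee_l$ and $|\beta|\leq|\aa+\ee_l|-j$) and that $\overline{\delta}_{\aa+\ee_l}=\overline{\delta}_{\aa}\cdot\overline{\delta}_{1,l}$, so the term $\overline{\delta}_{\aa}\d_{\aa,\xi}(f)$ differentiated by $\d_{x_l}$ contributes $\overline{\delta}_{\aa}\d_{x_l}(P)\d_{\xi_1}\d_{\aa,\xi}(f)+\overline{\delta}_{\aa}\overline{\delta}_{1,l}\d_{\aa+\ee_l,\xi}(f)$, the second summand being exactly the new last term, and the first being swept into the middle sum.

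The main obstacle is purely bookkeeping: one must verify that every term generated by differentiating the middle sum and the last term, other than the genuinely new leading and trailing terms, really does fall within the stated index ranges for $\aa+\ee_l$ — in particular that the $\xi_1$-derivative order $j$ stays between $1$ and $|\aa+\ee_l|-1$ and that $|\beta|\leq|\aa+\ee_l|-j$ is preserved. Since each application of $\d_{x_l}$ either raises the $\xi_1$-order by one while keeping $\beta$ fixed, or keeps the $\xi_1$-order and raises $\beta$ by $\ee_{l}$ (for $l\geq 2$), one checks that $j+|\beta|$ increases by at most one at each step, matching the growth of $|\aa|$; this is the inequality that must be tracked carefully. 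The coefficients $B_{j,\beta}^{\aa+\ee_l}$ are then defined by the resulting recursion — each is $\d_{x_l}$ of a previous $B$ plus $\d_{x_l}(P)$ times a previous $B$, so they are manifestly polynomials in the derivatives of $P$ — and no further structural claim about them is needed. I would present the base case and the chain-rule identity explicitly, then indicate the rearrangement leading to the recursion for the $B_{j,\beta}^{\aa}$, leaving the routine index verification to the reader.
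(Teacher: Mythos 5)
Your proposal is correct and follows essentially the same route as the paper: induction on $|\aa|$, with the base case given by the chain-rule identity $\d_{x_l}=\d_{x_l}(P)\,\d_{\xi_1}+\overline{\delta}_{1,l}\,\d_{\xi_l}$, the inductive step obtained by applying $\d_{x_l}$ (product rule on the coefficients plus this identity on the $\xi$-derivatives of $f$), isolating the new leading term $\d_{\aa+\ee_l}^\star(P)\d_{\xi_1}^{|\aa|+1}(f)$ and trailing term $\overline{\delta}_{\aa+\ee_l}\d_{\aa+\ee_l,\xi}(f)$, and defining the $B_{j,\beta}^{\aa+\ee_l}$ by the resulting recursion. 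The only cosmetic remark is that your dichotomy for the index bookkeeping should also include the case where $\d_{x_l}$ falls on a coefficient (leaving both $j$ and $\beta$ unchanged), but since that keeps $j+|\beta|$ constant it does not affect the verification, and your recursion for the $B$'s already accounts for it.
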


\begin{proof} Note that $\xi$ is indeed a holomorphic change of variables since $\xi(0)=0$ and its Jacobian determinant is precisely $\d_{x_1}(P)(0)\neq 0$. To prove the lemma we proceed by induction on $|\aa|$. In the case $|\aa|=1$, the chain rule shows that \begin{equation}\label{Eq. Aux Lema Convergent}
\d_{x_l}(f)=\d_{x_l}(P)\d_{\xi_1}(f)+\overline{\delta}_{1,l} \d_{\xi_l}(f),\quad l=1,\dots,d, 
\end{equation} proving this case. If we assume the result is valid up to some $|\aa|$, taking $l=1,\dots,d$, using the induction hypothesis and formula (\ref{Eq. Aux Lema Convergent}) we find that  \begin{align*}
&\d_{\aa+e_l,x}(f) =\d_{x_l}(\d_{\aa,x}(f))=\\
&\d_{x_l}(\d_{\aa}^\star(P)\d_{\xi_1}^{|\aa|}(f))+\sum_{j=1}^{|\aa|-1} \left[ \sum_{{(0,\beta)\leq \aa}\atop {|\beta|\leq |\aa|-j}} \d_{x_l}( B_{j,\beta}^{\aa}\cdot  \d_{\xi_1}^j\d_{\bb,\xi}(f) )\right] + \overline{\delta}_{\aa} \d_{x_l}(\d_{\aa,\xi}(f)),
\end{align*} which is equal to \begin{align}
\label{Eq. proof Lema3}
&\d_{x_l}(P)\d_{\aa,x}^\star(P) \d_{\xi_1}^{|\aa|+1}(f)+\overline{\delta}_{1,l}\d_{\aa}^\star(P)\d_{\xi_l}\d_{\xi_1}^{|\aa|}(f) +\d_{x_l}(\d_{\aa}^\star(P)) \d_{\xi_1}^{|\aa|}(f)+\\ \nonumber 
&\sum_{j=1}^{|\aa|-1} \!\! \sum_{{(0,\beta)\leq \aa}\atop {|\beta|\leq |\aa|-j}}  \!\! \d_{x_l}( B_{j,\beta}^{\aa}) \d_{\xi_1}^j\d_{\bb,\xi}(f) + B_{j,\beta}^{\aa}\left( \d_{x_l}(P)\d_{\xi_1}^{j+1}\d_{\beta,\xi}(f)+ \overline{\delta}_{1,l}\d_{\xi_1}^{j}\d_{\beta+e_l,\xi}(f)\right)\\ \nonumber
&+\overline{\delta}_{\aa} \d_{x_l}(P)\d_{\xi_1}\d_{\aa,\xi}(f)+\overline{\delta}_{\aa}\overline{\delta}_{1,l}\d_{\xi_l}\d_{\aa,\xi}(f).  
\end{align}

Note that the external terms are  $\d_{\aa+e_l,x}^\star(P) \d_{\xi_1}^{|\aa|+1}(f)$ and $\overline{\delta}_{\aa+e_l}\d_{\aa+e_l,\xi}(f)$ as required. On the other hand, the remaining terms have the form $B_{k,\gamma}^{\aa+e_l}\d_{\xi_1}^k \d_{\gamma,\xi}(f)$ with $1\leq k\leq |\aa|$ and $(0,\gamma)\leq \aa+e_l$, where the $B_{k,\gamma}^{\aa+e_l}$ can be found recursively. By the nature of the terms in the sum (\ref{Eq. proof Lema3}) it is clear that each $B_{k,\gamma}^{\aa+e_l}$ is a polynomial in the derivatives of $P$. The principle of induction allows to conclude the proof.
\end{proof}

\begin{nota} Another way to prove Lemma \ref{Lema 4} is applying Fa\`{a} di Bruno formula \cite[p. 505]{cosa} to $f(P(\xx),x_2,\dots,x_d)$. On the other hand, for our purposes it is not necessary to specify the recurrences to determine the coefficients $B^{\aa}_{j,\beta}$. However, for $\aa=n e_1$ and $l=1$, we have that $\beta=0$ for all $j$ and (\ref{Eq. proof Lema3}) takes the form \begin{align*}
\d_{x_1}^{n+1}(f)=&\d_{x_1}(P)^{n+1}\d_{\xi_1}^{n+1}(f)+\d_{x_1}((\d_{x_1}(P)^n))\d_{x_1}^n(f)\\
&+\sum_{j=1}^{n-1} \left[\d_{x_1}(B_{j,0}^{ne_1}) \d_{\xi_1}^j(f)+B_{j,0}^{ne_1} \d_{x_1}(P) \d_{\xi_1}^{j+1}(f)\right].
\end{align*} Setting $B_{n,0}^{ne_1}=(\d_{x_1}P)^n$, we find that $$B_{1,0}^{(n+1e_1)}=\d_{x_1}(B_{1,0}^{ne_1}),\quad B_{j,0}^{(n+1)e_1}=\d_{x_1}(B_{j,0}^{ne_1})+\d_{x_1}(P)B_{j-1,0}^{ne_1},\qquad j=2,\dots,n.$$ This is recurrence (\ref{Eq. 21}) in Lemma \ref{lema178}. Thus $B_{1,0}^{ne_1}=A_{ne_1,1}=\d_{x_1}^n(P)$,   $B_{j,0}^{ne_1}=A_{ne_1,j}$, $j=2,\dots,n-1$, and $$\d_{x_1}^n(f)=(\d_{x_1}P)^n \d_{\xi_1}^n (f)+\sum_{j=1}^n A_{ne_1,j} \d_{\xi_1}^j(f),\qquad \text{ for all } n\geq 1.$$

\end{nota}

\section{The proof of Theorems \ref{teopral} and \ref{Thm. Convergent}}\label{Sec. The proof of the main results}

The idea behind the proofs is simple. For Theorem \ref{teopral} we add a variable $t$ and working in $(t,x)\in(\C\times\C^{d},0)$ we search for a PDE satisfied by the series $\widehat{w}=\sum y_n t^n$, where $\widehat{y}=\sum y_n P^n$ is the solution to the initial equation (\ref{e1general}). For Theorem \ref{Thm. Convergent}, we can take $P=\xi_1$ as one of the variables, and write (\ref{e1general}) as an equation in the new coordinates. In both cases the existence, uniqueness and Gevrey type will be obtained from Theorem \ref{Thm. Gerard-Tahara1}.

\begin{proof}[Proof of Theorem \ref{teopral}]

We point out that if $F(x,0)\equiv 0$, then the the unique formal power solution is zero. Thus we assume $f(x):=F(x,0)\not\equiv 0$. We will write 
\begin{equation}\label{eq:F}
F(\xx,\yy)=f(\xx)+A(\xx)\yy+H(\xx,\yy),\quad H(\xx,\yy)=\sum_{I\in\N^N, |I|\geq 2} A_I(\xx) \yy^I,
\end{equation}
where $f\in\mathcal{O}_b(D_r^d,\C^N)$ with $f(\00)=\00$, $A\in\mathcal{O}_b(D_r^d,\C^{N\times N})$,  and $H\in\mathcal{O}_b(D_r^d\times D_r^N,\C^{N})$ has no constant nor linear terms in its Taylor expansion with respect to $\yy$ at the origin, and where $r>0$ is small. Since $A(\00)=D_yF(0,0)$ is invertible, by continuity we can assume $A(\xx)$ is also invertible for all $x\in D_r^d$. 

We search for a formal $P$-series solution of (\ref{e1general}) in the form
\begin{equation}\label{e53b}
\widehat{\yy}(\xx)=\sum_{n=0}^\infty \yy_n(\xx)P(\xx)^n,
\end{equation}
with the $\yy_n(\xx)\in \mathcal{O}_b(D_\rho^d,\C^N)$, for all $n\ge0$, for a common $\rho>0$. The rest of the proof is divided in several steps.

\textbf{Step 1}: We determine the terms $\yy_0(\xx),\dots,\yy_{k-1}(\xx)$ inductively solving adequate implicit equations. For the coefficient $\yy_0$, setting $x=0$ in (\ref{e1general}) and recalling that $F(0,0)=0$, we require that $\yy_0(\00)=\00$. Now we search for a holomorphic solution of   \begin{equation}\label{e67}
f(\xx)+A(\xx)\yy_0(\xx)+H(\xx,\yy_0(\xx))=0.
\end{equation} Since $A(0)$ is invertible, shrinking $r>0$ if necessary, the implicit function theorem leads to the existence of such solution  $\yy_0(\xx)\in\mathcal{O}_b(D_r^d,\C^N)$ with $\yy_0(\00)=\00$. Then, considering the change of variables $\yy=\yy_0+\ww_0$ in (\ref{e1general}), we find that $w_0$ satisfies the system  \begin{equation}\label{e212}
\sum_{j=1}^k P^{j}L_j(\ww_0)
=F_0(\xx,\ww_0)=g_0(\xx)+B_0(\xx)\ww_0+H_0(\xx,\ww_0),
\end{equation} with $F_0(\00,\00)=\00$, the matrix $B_0(\00)$ is invertible, and the Taylor expansion of $H_0\in\mathcal{O}_b(D_\rho^d\times D_\rho^N,\C^N)$ with respect to $\ww_0$ has no constant nor linear terms. Here, we have written 
$$g_0:=-\sum_{j=1}^k P^{j}L_j(\yy_0),\qquad B_0:=A+A_0,$$ where $A_0\in\mathcal{O}_b(D_\rho^d,\C^{N\times N})$ satisfies $A_0(\00)=\00$. These maps are obtained from  $$H(\xx,\yy_0(\xx)+\ww_0)-H(\xx,\yy_0(\xx))=A_0(\xx)\ww_0+H_0(\xx,\ww_0),
$$  $$A_0(\xx)=\left.D_{w_0}( H(\xx,\yy_0(\xx)+\ww_0))\right|_{w_0=0}=D_\yy H(\xx,\yy_0(\xx)).$$ 

We now proceed recursively, by means of the change of variables $$\ww_{m-1}=\ww_{m}+\yy_{m} P^{m},\qquad m=1,\dots,k-1,$$ and determining functions $g_m$, $A_m, B_m$ and $H_m$ defined by \begin{align}
\label{eq.271}& A_{m}\ww_{m}+H_{m}(\xx,\ww_{m})=H_{m-1}(\xx,\ww_{m}+\yy_{m} P^{m})-H_{m-1}(\xx,\yy_{m} P^{m})\\
\nonumber & g_m=-\sum_{j=1}^k P^{j}L_j(\yy_m P^m),\qquad B_m=B_{m-1}+A_m=A+(A_0+\cdots+A_m).
\end{align}  Note that $H_m$ has no constant or linear terms in its Taylor expansion in $\ww_m$ near the origin and that $B_m(\00)=A(\00)$ is an invertible matrix. Indeed, we see from  (\ref{eq.271}) that $A_{m}=\left. D_{ \ww_{m}}(H_{m-1}(\xx,\ww_{m}+\yy_{m}P^{m}))\right|_{w_{m}=0}=D_{\ww_{m-1}} H_{m-1}(\xx,\yy_{m} P^{m}),$ so $A_m(\00)=\00$ as required.

To proceed we need to define $\yy_m$ in a consistent way. If  $\yy_{m-1}$, $g_{m-1}$, $B_{m-1}$ and $H_{m-1}$ have been found, we set $y_m$ as the unique holomorphic solution near the origin of the system 
$$P^{-m}g_{m-1}+B_{m-1}\yy_{m}+P^{-m}H_{m-1}(\xx,\yy_{m} P^{m})=\00.$$ 
This equation has holomorphic coefficients on some neighborhood of the origin. Indeed, the function $g_{m-1}$ is divisible by $P^{m}$ thanks to Lemma \ref{Lema412} since $g_{m-1}=-P\cdot\sum_{j=1}^{m-1} P^{j-1}L_j(y_{m-1}P^{m-1})-\sum_{j=m}^k P^{j}L_j(y_{m-1}P^{m-1})$. Also, if we write $$H_m(\xx,\ww_m)=\sum_{|I|\geq 2} A_{I,m}(\xx) \ww_m^I,$$ then 
$$P^{-m}H_{m-1}(\xx,\yy_{m} P^{m})=\sum_{|I|\geq 2} A_{I,m-1}(\xx) P^{m(|I|-1)} \yy_m^I,$$ which also has holomorphic coefficients in $\xx$ that vanish at $\xx=\00$. Therefore, $\yy_m$ is determined by means of the implicit function theorem.

At this point it follows from a direct recursive argument that $w_m$ satisfies 
\begin{equation}\label{e250a}
\sum_{j=1}^k P^{j}L_j(\ww_{m})
=F_m(\xx,\ww_m):=g_{m}(\xx)+B_{m}(\xx)\ww_{m}+H_{m}(\xx,\ww_{m}),
\end{equation}  where $F_m(\00,\00)=0$, $B_m(\00)$ is invertible, and $H_m(\xx,\ww_m)$ has no constant nor linear terms in its Taylor expansion in $\ww_m$ in a neighborhood of the origin. In conclusion, after collecting all the previous changes of variables we find that $\ww=\ww_k$ defined by  
$\ww=\yy-(y_0+y_1P+\cdots+y_{k-1} P^{k-1})$ satisfies \begin{equation}\label{e250}
\sum_{j=1}^k P^{j}L_j(\ww)
=g(\xx)+B(\xx)\ww+H'(\xx,\ww),
\end{equation} where $g:=g_{k-1}\in\mathcal{O}_b(D_r^d,\C^N)$ is divisible by $P^k$, $B:=B_{k-1}\in\mathcal{O}_b(D_r^d,\C^{N\times N})$ with $B(0)$ invertible, and $H'=H_{k-1}\in\mathcal{O}_b(D_r^d\times D_r^N,\C^{N})$ has no constant nor linear terms in its Taylor expansion with respect to $\ww$, and where $r>0$ has been reduced when required. Therefore, we can restrict the problem to find a solution of (\ref{e250}) having the form $$\widehat{\ww}(\xx)=\sum_{n=k}^\infty y_n(\xx) P(\xx)^n.$$

\textbf{Step 2}: We study the action of the operator $P^jL_{j}$ on $\widehat{\ww}(\xx)$ for each $j=1,\dots,k$. By the multivariate Leibniz rule (\ref{Eq. MLR}) and Lemma \ref{lema178} we see that \begin{align*}
L_j(\ww)=&\sum_{|\aa|=j} a_{\aa}^{(j)}  \sum_{n=k}^\infty\d_{\aa}(y_nP^n)\\
=&\sum_{n=k}^\infty L_j(\yy_n) P^n + \sum_{n=k}^\infty \sum_{|\aa|=j} a_{\aa}^{(j)} \sum_{0<\bb\leq \aa} \binom{\aa}{\bb}  \partial_{\aa-\bb} (y_n) \sum_{l=1}^{|\bb|} \frac{n!}{(n-l)!} P^{n-l} \AA_{\bb,l},\\
=&\sum_{n=k}^\infty L_j(\yy_n) P^n +S_j,
\end{align*} where the first sum corresponds to $\bb=\00$. Note that the last inner sum is taken over $1\leq l \leq |\bb|$ since $A_{\bb,l}=0$ if $l>|\bb|$, and $|\bb|\leq |\aa|=j\leq k\leq n$. Let us write $S_j=S_{j,1}+S_{j,2}$, where $S_{j,2}$ retains the terms corresponding to $\bb=\aa$. Then $$S_{j,2}=S_{j,3}+\sum_{n=k}^\infty  \yy_n \frac{n!}{(n-j)!} P^{n-j} L_j^\star(P),$$ where $S_{j,3}$ contains the terms in which $l<j$ and $S_{j,2}-S_{j,3}$ in the previous expression corresponds to $l=j$, according to the definition of $L_j^\star$. Therefore, $$
S_{j,3} = \sum_{n=k}^\infty  \sum_{l=1}^{j-1} \left[\sum_{|\aa|=j} a_{\aa}^{(j)} \AA_{\aa,l}\right] \yy_n \frac{n!}{(n-l)!} P^{n-l}.$$
 On the other hand, we can organize the terms in $S_{j,1}$ to write
\begin{align*}
S_{j,1} &= \sum_{n=k}^\infty  \sum_{m=1}^{j-1} \sum_{l=1}^{m} \left[ \sum_{{|\aa|=j, }\atop {|\bb|=m, \bb<\aa} }  \binom{\aa}{\bb}  a_{\aa}^{(j)}  \partial_{\aa-\bb} (y_n) \AA_{\bb,l} \right]  \frac{n!}{(n-l)!} P^{n-l} \\
&= \sum_{n=k}^\infty  \sum_{l=1}^{j-1}  \left[ \sum_{m=l}^{j-1} \sum_{{|\aa|=j, }\atop {|\bb|=m, \bb<\aa} }  \binom{\aa}{\bb}  a_{\aa}^{(j)}  \partial_{\aa-\bb} (y_n) \AA_{\bb,l}\right]  \frac{n!}{(n-l)!} P^{n-l},
\end{align*} by grouping those indices $\bb$ with the same norm.

\

\textbf{Step 3}: We search for a partial differential equation satisfied by \begin{equation}\label{e307}
\widehat{W}(t,\xx):=\sum_{n=k}^\infty \yy_n(\xx) t^n,
\end{equation} from the system (\ref{e250}) satisfied by $\widehat{\ww}(\xx)=\widehat{W}(P(\xx),\xx)$. Indeed, recalling (\ref{e138}) we can write $L^{\star}_{j}(P)=\phi_j\cdot P$, for some holomorphic function $\phi_j$ near the origin. Therefore, noticing that $\frac{n!}{(n-l)!} P^{n-l}=\d_t^l(t^n)|_{t=P}$, for $l\leq n$, we find $$P^j\sum_{n=k}^\infty  \yy_n \frac{n!}{(n-j)!} P^{n-j} L_j^\star(P)=\left.\phi_j t^{j+1}\d_t^j(\widehat{W})\right|_{t=P}.$$ Let us  consider the differential operator \begin{align*}
K(\xx)&(t,D^kW):=-H'(\xx,W)+\sum_{j=1}^k \Bigg[ t^j L_j(W)+\phi_j t^{j+1}\d_t^j(W)\\
&+\sum_{l=1}^{j-1} \Bigg(\sum_{|\aa|=j} a_{\aa}^{(j)} \AA_{\aa,l}  t^j\d_t^l(W)+   \sum_{m=l}^{j-1} \sum_{{|\aa|=j, }\atop {|\bb|=m, \bb<\aa} }  \binom{\aa}{\bb}  a_{\aa}^{(j)} \AA_{\bb,l} \partial_{\aa-\bb}t^j\d_t^l(W)\Bigg)\Bigg].
\end{align*} Then $\widehat{\ww}(x)=\sum_{n=k}^\infty y_n P^n$ satisfies (\ref{e250}) if and only if $\widehat{W}$ in (\ref{e307}) satisfies $$B(x)W=-h(\xx)t^k+K(x)(t,D^kW),$$ where 
$g=h \cdot P^k$. Theorem \ref{Thm. Gerard-Tahara1} for  $p=0$ proves that this equation has a unique formal power series solution $\widehat{W}(t,\xx)$ of the form (\ref{e307}). Therefore, we have the existence and uniqueness of the solution $w$ of equation (\ref{e250}) and therefore of the main equation (\ref{e1general}). 

Finally, Theorem \ref{Thm. Gerard-Tahara1} also asserts that  $\widehat{W}(t,\xx)$ is $s$-Gevrey in $t$ where $s$ in (\ref{Eq. Gevrey class}) is computed using the  derivatives appearing in $K$. In this case, $s_0(t^j \d_{\aa})=|\aa|/j=1$, for the terms in $t^j L_j$,  \begin{align*}
s_0(t^{j+1}\d_t^j)&=j,\quad s_0(t^{j}\d_t^l)=\frac{l}{j-l},\text{ and }\quad s_0(t^{j}\d_t^l \d_{\aa-\bb})=\frac{l+|\aa|-|\bb|}{j-l},
\end{align*} where $1\leq j\leq k$, $l\leq j-1$, $|\aa|=j$, $0<\bb\leq \aa$, and $l\leq |\bb|\leq j-1$. Thus $$s_0(t^{j}\d_t^l)\leq \frac{j-1}{j-l}\leq j-1,\quad \text{ and }\quad s_0(t^{j}\d_t^l \d_{\aa-\bb})\leq \frac{|\aa|}{j-l}\leq j.$$ Therefore, the maximum $s$ of these values is $k$ and it attained at the term $\phi_k t^{k+1}\d_t^k$, when $\phi_k\neq 0$. If $\phi_k=0$, we still have that $s=k$ as it is also attained at the terms $a^{(k)}_{\aa} A_{\bb,|\bb|}\d_{\aa-\bb}  t^k\d_t^{k-1}$, where $|\aa|=k$ and $\bb<\aa$ with $|\bb|=k-1$. But $L_k\neq 0$, so there is $\aa_0\in\N^d$ with $|\aa_0|=k$ and  $a_{\aa_0}^{(k)}\neq 0$. Recalling formula (\ref{Eq. DaP^n}) we see that at least one of these terms appears in $K$, thus $s=k$. In conclusion, $\widehat{W}$ is a $k$-Gevrey series in $t$, i.e, $\widehat{w}$ is a $P$-$k$-Gevrey series as we wanted to show.
\end{proof}

It is worth remarking that the proof of Theorem \ref{teopral} simplifies considerably when $k=1$. To highlight the main ideas used we reproduce the argument again.

\begin{coro}\label{teo2}  
Consider the partial differential equation \begin{equation}\label{e1}
P(\xx)L_1(\yy)
=F(\xx,\yy),
\end{equation} where $L_1=a_1\d_{x_1}+\cdots+a_d \d_{x_d}$ has holomorphic coefficients at the origin, $F$ is holomorphic near the origin, $F(\00,\00)=\00$, and $D_yF(0,0)$ is an invertible matrix. If $P$ divides $L_1(P)$,  equation (\ref{e1}) has a unique formal power series solution $\widehat{\yy}\in\C[[x]]^{N}$ with $\widehat{y}(0)=0$, which is a $P$-$1$-Gevrey series.
\end{coro}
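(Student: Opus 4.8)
The plan is to specialize the strategy of Theorem~\ref{teopral} to the case $k=1$, where the preliminary normalization (Step~1) collapses to a single implicit-function-theorem step and the bookkeeping of Step~2 simplifies drastically. First I would dispose of the trivial case: if $F(\xx,\00)\equiv\00$ then $\widehat{\yy}=\00$ is the unique solution, so assume $f(\xx):=F(\xx,\00)\not\equiv\00$ and write $F(\xx,\yy)=f(\xx)+A(\xx)\yy+H(\xx,\yy)$ with $A(\00)=D_yF(\00,\00)$ invertible and $H$ having no constant or linear terms in $\yy$. Next, since evaluating \eqref{e1} at $\xx$ on $Z_P$ forces the relation $F(\xx,\yy_0(\xx))=\00$ there, I would use the implicit function theorem to solve $f(\xx)+A(\xx)\yy_0(\xx)+H(\xx,\yy_0(\xx))=\00$ for a unique holomorphic germ $\yy_0$ with $\yy_0(\00)=\00$, and substitute $\yy=\yy_0+\ww$. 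A short computation (using that $g:=-PL_1(\yy_0)$ is divisible by $P$, which is immediate here since it is literally $P$ times $-L_1(\yy_0)$) shows $\ww$ satisfies an equation of the same shape, $PL_1(\ww)=g(\xx)+B(\xx)\ww+H'(\xx,\ww)$ with $g$ divisible by $P$, $B(\00)$ invertible, and $H'$ with no constant or linear terms; so it suffices to produce the solution $\widehat{\ww}=\sum_{n\ge1}\yy_n P^n$.

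The core step is to lift the equation to $(t,\xx)\in(\C\times\C^d,\00)$ by setting $\widehat{W}(t,\xx):=\sum_{n\ge1}\yy_n(\xx)t^n$, so that $\widehat{\ww}(\xx)=\widehat{W}(P(\xx),\xx)$. I would compute $PL_1(\ww)$ termwise: by Lemma~\ref{lema178}, $\d_\aa(\yy_nP^n)=\sum_{|\aa|=1}$ contributes only $\d_{x_l}(\yy_nP^n)=P^n\d_{x_l}(\yy_n)+nP^{n-1}\yy_n\d_{x_l}(P)$, hence $L_1(\ww)=\sum_{n\ge1}L_1(\yy_n)P^n+\sum_{n\ge1}n\,\yy_n\,L_1(P)\,P^{n-1}$. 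Using the hypothesis $P\mid L_1(P)$, write $L_1(P)=\phi_1\cdot P$; since $\frac{n!}{(n-1)!}P^{n-1}=\d_t(t^n)|_{t=P}$ and $\frac{n!}{n!}P^n\cdot$(the $L_1(\yy_n)$ term) $=t^n L_1(\cdot)|_{t=P}$, multiplying by $P$ turns the whole left side into $\bigl(t\,L_1(W)+\phi_1\,t^2\d_t(W)\bigr)\big|_{t=P}$. Thus $\widehat{\ww}$ solves the reduced equation if and only if $\widehat{W}$ solves
\begin{equation*}
B(\xx)W = -h(\xx)t + t\,L_1(W)+\phi_1(\xx)\,t^2\d_t(W)+H'(\xx,W),
\end{equation*}
where $g=h\cdot P$. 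This is precisely an equation of type \eqref{Eq. Main GT} with $p=0$, $k=1$, $c_0=B$ (invertible at $\00$, so \eqref{Eq. cpc0 invertible} holds trivially since the matrices $c_0(\00)n^0$ are all $B(\00)$), $m=2$, and the nonlinear part collected in $H'$. Theorem~\ref{Thm. Gerard-Tahara1} then yields a unique formal solution $\widehat{W}(t,\xx)=\sum_{n\ge1}\yy_n(\xx)t^n$ with $\yy_n$ holomorphic and bounded on a common polydisc, hence a unique $\widehat{\ww}$, hence a unique formal solution $\widehat{\yy}$ of \eqref{e1}.

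For the Gevrey estimate I would read off $s$ from \eqref{Eq. Gevrey class}–\eqref{Eq. s}: the relevant derivations are $t^1\d_\aa$ with $|\aa|=1$, giving $s_0(t\,\d_\aa)=|\aa|/1=1$, and $t^2\d_t$, giving $s_0(t^{1+1}\d_t)=(1-0)/1=1$. So every contributing term has $s_0=1$, whence $s\le1$; and since $L_1\not\equiv0$ at least one term $t\,\d_{x_l}W$ genuinely occurs in $J$, so $s=1$. Therefore $\widehat{W}$ is $1$-Gevrey in $t$, which by Definition~\ref{Def.Gevrey} means exactly that $\widehat{\ww}$, and hence $\widehat{\yy}=\yy_0+\widehat{\ww}$ (adding a convergent germ does not affect the $P$-Gevrey class, by property~(1) in Section~\ref{Sec. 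Gevrey series}), is a $P$-$1$-Gevrey series. I expect no serious obstacle here beyond carefully checking that $g$ is divisible by $P$ (trivial when $k=1$, since it is $P$ times a holomorphic function) and that $\yy_0$ exists via the implicit function theorem; the one place to be slightly careful is confirming that the maps $\phi_1$, $h$, $B$, $H'$ are genuinely holomorphic and bounded on a common polydisc so that Theorem~\ref{Thm. Gerard-Tahara1} applies verbatim.
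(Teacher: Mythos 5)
Your proposal is correct and follows essentially the same route as the paper's own proof of this corollary: reduce via the implicit function theorem and the substitution $\yy=\yy_0+\ww$, lift to $\widehat{W}(t,\xx)=\sum_{n\ge1}\yy_n t^n$ using $P\,L_1(\widehat{\ww})=\bigl(tL_1+\phi\,t^2\d_t\bigr)\widehat{W}\big|_{t=P}$, and apply Theorem~\ref{Thm. Gerard-Tahara1} with $p=0$ to get existence, uniqueness, and $s=1$. The only quibble is a harmless sign slip (the nonlinear term should enter the lifted equation as $-H'(\xx,W)$, as in the paper's reduction), which affects nothing in the argument.
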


\begin{proof} Writing $F$ as in equation (\ref{eq:F}) and setting $y=y_0+w$ in the equation (\ref{e1}), where $y_0$ solves the implicit equation (\ref{e67}), we find that $w$ satisfies \begin{equation}\label{e409}
P\cdot L_1(\ww)
=g_0(\xx)+B_0(\xx)\ww+H_0(\xx,\ww),
\end{equation} where $g_0=-P\cdot L_1(y_0)$, $B_0(x)$ is invertible at $x=0$, and the Taylor expansion of $H_0(x,w)$ in $w$ has no constant nor linear terms. This reduces the problem to find a formal solution  $\widehat{\ww}(\xx)=\widehat{W}(x,P(x)),$ where $\widehat{W}(x,t)=\sum_{n=1}^\infty y_n(x)t^n$. Note that \begin{align*}
P\cdot L_1(\widehat{\ww})
=\sum_{n=1}^\infty L_1(\yy_n)P^{n+1}+ \phi \cdot n\yy_n   P^{n+1} = \left. \left(tL_1+\phi t^2\partial_t\right)(\widehat{W})\right|_{t=P},
\end{align*} where $L_1(P)=\phi\cdot P$. Therefore, $\widehat{w}$ solves  (\ref{e409}) if and only $\widehat{W}$ solves 
\begin{equation}\label{e15}
B_0(\xx)W=-L_1(y_0)t+\left(tL_1+\phi(\xx) t^2\partial_t\right)W-H_0(\xx,W).
\end{equation}

Theorem \ref{Thm. Gerard-Tahara1} for $p=0$ shows that (\ref{e15}) has a unique formal power series solution $\widehat{W}(x,t)$ where the $y_n$ are holomorphic functions in a common neighborhood of $\00\in\C^d$. Moreover, $\widehat{W}$ is $s$-Gevrey, where $s$ is the of $s_0(\phi t^2\partial_t)=1$ and $s_0(t\partial_{x_j})=1$, for $j=1,\dots,d$ such that $a_j\neq 0$. Since $L_1\not\equiv0$, it follows that $s=1$ as required.
\end{proof}

We move now to Theorem \ref{Thm. Convergent}. Although we can apply the same technique as in Theorem \ref{teopral}, it is easier to directly take $P$ as one of the coordinates.

\begin{proof}[Proof of Theorem \ref{Thm. Convergent}] By hypothesis $L_k^{\ast}(P)(0)=\sum_{|\aa|=k} a_{\aa}^{(k)}(0) \d_{\aa}^\star(P)(0)\neq 0$. Thus at least one of these terms is non-zero, so necessarily $\d_{x_l}(P)(0)\neq 0$ for some $l=1,\dots,d$ ---recall (\ref{e337})---. Up to permuting the coordinates we can assume that $l=1$. We make the change of variables (\ref{Eq. xi}) and write equation (\ref{e1general}) in the coordinates $\xi=(\xi_1,\xi')$, $\xi':=(\xi_2,\dots,\xi_d)$.  In fact, setting $u(\xi)=y(\xx)$, Lemma \ref{Lema 4} shows that $$\sum_{j=1}^k P^j L_j(y)=\sum_{j=1}^k L_j^\star(P) \xi_1^j \d_{\xi_1}^{j}(u)+\xi_1^j C_j(\xi,u,D^ju),$$ $$C_j(\xi,u,D^ju):=\sum_{|\aa|=j} \overline{a}_{\aa}^{j}(\xi)\left[\overline{\delta}_{\aa}\d_{\aa,\xi}(u)+\sum_{l=1}^{j-1}  \sum_{\ast} B_{l,\beta}^{\aa}\cdot  \d_{\xi_1}^l\d_{\bb,\xi}(u)\right],$$ where the inner sum is taken over all $\beta\in\N^{d-1}$ such that $(0,\beta)\leq \aa$ and $|\beta|\leq |\aa|-l$, and $\overline{a}_{\aa}^{j}(\xi)=a_{\aa}^{(j)}(\xx)$. Therefore, $y(x)=\sum_{\bb\in \N^d} y_{\bb} \xx^{\beta}\in\C[[x]]^N$ is a solution of (\ref{e1general}) if and only if $u(\xi)=y(x(\xi))=\sum_{n=0}^\infty u_n(\xi')\xi_1^n\in\C[[\xi]]^N$ satisfies \begin{equation}\label{Eq. Aux Thm2}
\sum_{j=1}^k L_j^\star(P) \xi_1^j \d_{\xi_1}^{j}(u)=\overline{F}(\xi,u)-\sum_{j=1}^k \xi_1^j C_j(\xi,u,D^ju),
\end{equation} where $\overline{F}(\xi,u)=F(\xx,\yy)$.  Write $F$ as in equation (\ref{eq:F}), and expand $A(x)=\overline{A}(\xi)=A_0(\xi')+\sum_{m= 1}^\infty  A_m(\xi')\xi_1^m$ in powers of $\xi_1$, where $A_0(0)=A_0$. Then we conclude that (\ref{Eq. Aux Thm2}) has the form of equation (\ref{Eq. Main GT}) with $p=k$, and $$c_0(\xi')=A_0(\xi'),\qquad c_j=L_j^\star(P)I_N,\qquad j=1,\dots,k.$$ Now, Remark \ref{Rrm Stirling} and the hypothesis (\ref{Eq. Condition Thm2}) guarantee that we can apply Theorem \ref{Thm. Gerard-Tahara1} to (\ref{Eq. Aux Thm2}) to conclude the existence and uniqueness of the solution ${u}(\xi)\in \C[[\xi]]^N$ which is $s$-Gevrey, with $s$ as in (\ref{Eq. s}). The terms that appear in  (\ref{Eq. Aux Thm2}) satisfy $$s_k(\xi_1^j\d_{\aa,\xi})= \max\left\{0,\frac{|\aa|-k}{j}\right\}=0,\quad s_k(\xi_1^j\d_{\xi_1}^l\d_{\bb,\xi})=\max\left\{0,\frac{|\bb|+l-k}{j-l}\right\}=0,$$ because $|\bb|+l-k\leq |\aa|-k=j-k\leq 0$. Therefore, $s=0$ and ${u}(\xi)={y}(x)\in\C\{x\}^N$ is convergent as we wanted to show.
\end{proof}

For the case $k=1$ Corollary \ref{teo2} takes the following form, c.f. \cite[Theorem 2]{cahu}.

\begin{coro}\label{Coro 2} Assume the conditions of Corollary \ref{teo2}, but now suppose that $L_1(P)(\00)\neq 0$. If $nL_1(P)(\00)I_N-D_yF(0,0)\in\textup{GL}_N(\C)$, for all $n\in\N$, equation (\ref{e1}) has a unique analytic solution at the origin $\widehat{\yy}\in \C\{\xx\}^N$ with $\widehat{y}(0)=0$.
\end{coro}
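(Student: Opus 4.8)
The plan is to deduce this from Theorem \ref{Thm. Convergent}, of which it is the case $k=1$, and ---following the pattern by which Corollary \ref{teo2} reproduces the proof of Theorem \ref{teopral}--- to record the (now very short) argument in this simpler situation. Since $L_1^\star(P)=L_1(P)$, the hypothesis $L_1(P)(\00)\neq 0$ is exactly $L_k^\star(P)(\00)\neq 0$ for $k=1$, and for $k=1$ the matrix in (\ref{Eq. Condition Thm2}) is $\frac{n!}{(n-1)!}L_1(P)(\00)I_N-D_yF(0,0)=nL_1(P)(\00)I_N-D_yF(0,0)$ (the case $n=0$ reducing to the already assumed invertibility of $D_yF(0,0)$); so condition (\ref{Eq. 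Condition Thm2}) is precisely what is assumed here.

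First I would observe that $L_1(P)(\00)=\sum_{l}a_l(\00)\,\d_{x_l}(P)(\00)\neq 0$ forces $\d_{x_l}(P)(\00)\neq 0$ for some $l$, so after permuting the coordinates I may take $l=1$ and the map $\xi$ of (\ref{Eq. xi}) is a biholomorphism near the origin. Next I would set $u(\xi)=y(\xx)$ and rewrite (\ref{e1}) in the $\xi$ variables: applying the chain rule in the form (\ref{Eq. Aux Lema Convergent}) to $L_1(\yy)$ and using $P(\xx)=\xi_1$, the equation becomes
$$\xi_1\,\overline{L_1(P)}(\xi)\,\d_{\xi_1}(u)+\xi_1\sum_{l=2}^{d}\overline{a}_l(\xi)\,\d_{\xi_l}(u)=\overline{F}(\xi,u),$$
where a bar indicates a function read in the new coordinates, so that $\overline{L_1(P)}(\00)=L_1(P)(\00)$. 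Then I would peel off the $\xi_1$-independent parts of $\overline{L_1(P)}(\xi)$ and of the linear term $\overline{A}(\xi)u$ (with $F$ written as in (\ref{eq:F})), moving the latter to the left-hand side, so as to cast the equation in the shape (\ref{Eq. Main GT*}) with time variable $t=\xi_1$, spatial variables $\xi'=(\xi_2,\dots,\xi_d)\in(\C^{d-1},\00)$, order $p=1$, and leading coefficients $c_1'(\00)=L_1(P)(\00)I_N$ and $c_0'(\00)=\pm D_yF(0,0)$ (the overall sign being immaterial for invertibility). By Remark \ref{Rrm Stirling}, the Poincar\'{e}-type hypothesis of Theorem \ref{Thm. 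Gerard-Tahara1} then reduces to $L_1(P)(\00)\neq 0$ together with $nL_1(P)(\00)I_N-D_yF(0,0)\in\textup{GL}_N(\C)$ for all $n\geq 0$, i.e.\ the assumption; this yields a unique formal solution $\widehat{u}(\xi)=\sum_{n\geq 0}u_n(\xi')\xi_1^n$ with holomorphic coefficients in a common polydisc in $\xi'$.

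Finally I would read off the Gevrey order from (\ref{Eq. s}): every operator on the right-hand side is of the form $\xi_1^{j+b}\d_{\xi_1}^{b}\d_{\aa,\xi'}$ with $j\geq 1$ and $b+|\aa|\leq 1=p$ (the extreme cases being $\xi_1\cdot\xi_1\d_{\xi_1}$ and $\xi_1\d_{\xi_l}$, $l\geq 2$), so $s_p=\max\{0,(b+|\aa|-1)/j\}=0$ throughout and $s=0$; hence $\widehat{u}$ converges, and $\widehat{y}(\xx)=\widehat{u}(\xi(\xx))\in\C\{\xx\}^N$ is the desired unique analytic solution, with $\widehat{y}(\00)=\widehat{u}(\00)=\00$. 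The one genuinely technical step is the change of variables together with the bookkeeping that brings the transformed equation exactly to the form (\ref{Eq. Main GT*}) with coefficients depending on $\xi'$ alone; this mirrors what is done in the proof of Theorem \ref{Thm. Convergent}, so I would keep it brief.
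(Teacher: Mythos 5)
Your proposal is correct and follows the paper's own route: Corollary \ref{Coro 2} is obtained there precisely as the $k=1$ specialization of Theorem \ref{Thm. Convergent}, i.e.\ the change of variables $\xi_1=P(\xx)$ from Lemma \ref{Lema 4}, recasting the equation in the form (\ref{Eq. Main GT*}) with $p=1$, $c_1'(\00)=L_1(P)(\00)I_N$, $c_0'(\00)=-D_yF(0,0)$, and applying Theorem \ref{Thm. Gerard-Tahara1} via Remark \ref{Rrm Stirling} with resulting Gevrey index $s=0$. Your identification of the hypotheses ($L_1^\star(P)=L_1(P)$ and condition (\ref{Eq. Condition Thm2}) becoming $nL_1(P)(\00)I_N-D_yF(0,0)\in\textup{GL}_N(\C)$, the $n=0$ case being the assumed invertibility of $D_yF(0,0)$) matches the paper exactly, and the level of detail you leave implicit (e.g.\ the treatment of the inhomogeneous term $\overline{f}(\xi)$) is the same as in the paper's proof of Theorem \ref{Thm. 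Convergent}.
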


\section{Examples}\label{Sec. Examples}

We include some worked examples. In particular, Example \ref{eje3} shows that the Gevrey type provided by Theorem \ref{teopral} is attained, thus, in general it cannot be improved. For more examples in the case $k=1$ we refer to \cite{cahu}, including the use of  ramifications and punctual blow-ups (\ref{Eq. Blow up}) to bring other differential equations into a form where Theorem \ref{teopral} can be applied.

\begin{eje}\label{eje1} Fix integers $m,k\geq 1$ and consider the scalar equation \begin{equation}\label{Eq. Ex1}
x^{(m+1)k}\d_{x}^{k}y=y-1-\frac{x^k}{k!}.
\end{equation} It has a unique formal solution  
$\widehat{y}(x)=\sum_{n=0}^{\infty}{y_n}x^n$ given by $$y_0=y_k=y_{mk+k}=1,\qquad y_{jmk+k}=\prod_{l=1}^{j-1} \frac{(lmk+k)!}{(lmk)!},\quad j\geq 2,$$ and $y_n=0$ in other cases. But $(lmk+k)!/(lmk)!=(lmk+1)\cdots(lmk+k)\leq (lmk+k)^k\leq ((j-1)mk+k)^k=k^k((j-1)m+1)^k\leq (2km)^k(j-1)^k$. Thus $$a_j:={y_{jmk+k}}\leq (2km)^{k(j-1)}(j-1)^{k(j-1)},$$ and \begin{equation}\label{Eq. Ex1.1}
\widehat{y}(x)=1+x^k\cdot\sum_{j=0}^\infty {a_{j}} (x^{mk})^j\,\, \text{ is  $x^{mk}$-$k$-Gevrey, i.e., it is $x$-$1/m$-Gevrey}.
\end{equation} On the other hand, for $k\geq 2$ we can apply Theorem \ref{teopral} to $P(x)=x^{m+1}$ and $L_k=\d_x^k$ to conclude that $\widehat{y}$ is $x^{m+1}$-$k$-Gevrey, i.e., $x$-$\frac{k}{m+1}$-Gevrey. In fact,  $$L_k^\star(P)=((m+1)x^m)^k \text{ is divisible by } P,$$ since $m+1\leq mk$. If $m+1<mk$, (\ref{Eq. Ex1.1}) gives a better bound. However, if $m+1=mk$, then $m=1$, $k=2$, and  Theorem \ref{teopral} gives an optimal bound. Indeed, $$\widehat{y}(x)=1+\frac{x^2}{2}\sum_{j=0}^\infty (2j)!x^{2j}$$ which is exactly $x^2$-$2$-Gevrey, i.e., $x$-$1$-Gevrey.
\end{eje}

\begin{eje}\label{eje3} Consider the scalar equation $$x_1^2x_2^2(x_1^2\d_{x_1}^2u+x_2^2\d_{x_2}^2u+2\d_{x_1x_2}u)-2u=2x_1x_2,$$ having as unique formal power series solution  $\widehat{u}(x_1,x_2)=\sum_{n=0}^\infty a_n x_1^n x_2^n$. In fact, this equation corresponds to the ODE $$t^4\d_t^4w+t(t\d_t)^2w-w=t,$$ where $t=x_1x_2$ and $u(x_1,x_2)=w(t)$. Theorem \ref{Thm. Gerard-Tahara1} proves that $\widehat{w}(t)=\widehat{u}(x_1,x_2)$ is $t$-$2$-Gevrey. Theorem \ref{teopral} applied to  $k=2$, $P=x_1x_2$, $L_1=0$, and $$ L_2=x_1^2\d_{x_1}^2+x_2^2\d_{x_2}^2+2\d_{x_1x_2},$$ shows $\widehat{u}$ is $x_1x_2$-2-Gevrey since $L_1^\star(P)=0$ and  $L_2^\star(P)=2P^2+2P=2P(1+P)$. 

We can also find the Gevrey order by direct means. First, the $a_n$ are given by $a_0=0$, $a_1=a_2=1$, and $$ a_n=(n-1)^2 a_{n-1}+(n-2)(n-3)a_{n-2},\qquad \text{ for } n\geq 3.$$ If we set $\a_n=a_n/(n-1)!^2$, $n\geq 1$, this sequence satisfies $$\a_n=\a_{n-1}+\frac{n-3}{(n-1)^2(n-2)}\a_{n-2}.$$ It follows by induction that $1\leq \a_n\leq \varphi^n$, where $\varphi=(1+\sqrt{5})/2$ solves  $\varphi^2=\varphi+1$. In conclusion, $(n-1)!^2\leq a_n\leq \varphi^n (n-1)!^2,$ so $\widehat{u}$ is exactly $x_1x_2$-$2$-Gevrey. Thus, the Gevrey type provided by Theorem \ref{teopral} cannot be improved.
\end{eje}

\begin{eje}\label{ejeLast} Returning to the framework of singular perturbations, we consider systems $$\epsilon^k x^{k+1}\d_x^ky+\sum_{j=1}^{k-1} \epsilon^j x^{j+1} a_j(x,\epsilon)\d_x^jy=F(x,\epsilon,y),$$ where  $\C\ni\epsilon\to 0$, $x\in\C$, the $a_j$ are holomorphic near $(0,0)\in\C^2$, and $y$ and $F$ are as in Theorem \ref{teopral}. The main result can be applied to $P(x,\epsilon)=x\epsilon$, $L_k=x\d_x^k$ and $L_j=a_j x\d_x^j$ since $$L_k^\star(P)=x(\d_xP)^k=x\epsilon^k,\quad \text{ and }\quad  L_j^\star(P)=a_j x(\d_xP)^j=a_jx\epsilon^j,$$ are divisible by $P$. Therefore, this system has a unique formal power series solution in $x$ and $\epsilon$ which is $x\epsilon$-$k$-Gevrey. The case $k=1$ was first established in \cite{CDRSS}.
\end{eje}

\begin{eje} Fix $\aa\in\N^d\setminus\{0\}$ and consider the equation $$(\xx^\aa)^k L_k(y)(x)+\cdots+\xx^\aa L_1(y)(x)=F(\xx,\yy),$$ with  differential operators of the form $L_j=\sum_{|\bb|=j} b_{\bb}(\xx) \xx^{\bb} \d_{\bb},$ where $b_{\bb}\in\mathcal{O}_b(D_r^d)$, for a common $r>0$. Assuming that $D_yF(0,0)$ is invertible, since $$L_j^\star(\xx^\aa)= \xx^{j\aa}\cdot \sum_{|\bb|=j} \aa^\bb b_{\bb}(x),$$ is divisible  by $\xx^\aa$ for all $j=1,\dots,d$, Theorem \ref{teopral} proves that this equation has a unique $\xx^{\aa}$-$k$-Gevrey series solution. Note that equation (\ref{Eq. Intro X}) is a particular case for $k=1$, where this Gevrey bound is optimal due to the $x^\a$-$1$-summability of the solution and Tauberian theorems for these methods, \cite{Carr1,CMS19}.
\end{eje}

\end{document}